\numberwithin{figure}{section}
\numberwithin{table}{section}
\theoremstyle{plain}
\newcommand{\norm}[1]{\left\Vert#1\right\Vert}
\newcommand{\R}{\mathbb R}
\newtheorem{theorem}{Theorem}[section]
\newtheorem{proposition}[theorem]{Proposition}
\newtheorem{lemma}[theorem]{Lemma}
\newtheorem{definition}[theorem]{Definition}
\numberwithin{equation}{section}     
\numberwithin{figure}{section}
\numberwithin{table}{section}
\newcounter{asnr}
\ifnum\value{asnr}=0 \stepcounter{asnr} 
\newcounter{defnr}
\ifnum\value{defnr}=0 \stepcounter{defnr} 
\numberwithin{equation}{section} \allowdisplaybreaks
\title[RK-LDG scheme for the  BO equation]
{Stability of fully Discrete Local Discontinuous Galerkin method for the generalized Benjamin-Ono equation}
\date{}
\author[M. Dwivedi]{Mukul Dwivedi}
\address[Mukul Dwivedi]{\newline
Department of Mathematics, 
	Indian Institute of Technology Jammu,
	Jagti, NH-44 Bypass Road, Post Office Nagrota,
	Jammu - 181221, India}
\email[]{2020rma1031@iitjammu.ac.in}
\author[T. Sarkar]{Tanmay Sarkar}
\address[Tanmay Sarkar]{\newline
	Department of Mathematics, 
	Indian Institute of Technology Jammu,
	Jagti, NH-44 Bypass Road, Post Office Nagrota,
	Jammu - 181221, India}
\email[]{tanmay.sarkar@iitjammu.ac.in}
\subjclass[2020]{Primary: 65M60, 35R09; Secondary: 65M12.}
\keywords{Local discontinuous Galerkin method, Benjamin-Ono equation, Hilbert transform, stability, error analysis.}
\thanks{}
\begin{document}

\begin{abstract}
The main purpose of this paper is to design a fully discrete local discontinuous Galerkin (LDG) scheme for the generalized Benjamin-Ono equation. First, we proved the $L^2$-stability for the proposed semi-discrete LDG scheme and obtained a sub-optimal order of convergence for general nonlinear flux. We develop a fully discrete LDG scheme using the Crank-Nicolson (CN) method and fourth-order fourth-stage Runge-Kutta (RK) method in time. Adapting the methodology established for the semi-discrete scheme, we demonstrate the stability of the fully discrete CN-LDG scheme for general nonlinear flux. Additionally, we consider the fourth-order RK-LDG scheme for higher order convergence in time and prove that it is strongly stable under an appropriate time step constraint by establishing a \emph{three-step strong stability} estimate for linear flux. Numerical examples associated with soliton solutions are provided to validate the efficiency and optimal order of accuracy for both methods. 
\end{abstract}
\maketitle
\section{Introduction}\label{sec1}
We consider the following Cauchy problem associated to the Benjamin-Ono equation in the generalized form
\begin{equation}\label{BOeqn}
\begin{cases}
U_t + f(U)_x -\mathcal{H}U_{xx} = 0, \qquad &(x,t) \in \mathbb{R}\times (0,T],\\
U(x,0) = U_0(x), \qquad &x \in \mathbb{R},
\end{cases}
\end{equation}
where $T>0$ is fixed, $U_0$ represents the prescribed initial data, $f$ is the given flux function, and $\mathcal{H}$ denotes the Hilbert transform \cite{dutta2016convergence, thomee1998numerical}, defined by the principle value integral
\begin{align*}
    \mathcal{H}U(x): = \text{P.V.} \frac{1}{\pi} \int_{\R}\frac{U(x-y)}{y}~dy.
\end{align*}

The Benjamin-Ono equation \eqref{BOeqn} is a nonlinear, non-local partial differential equation that finds application in various physical phenomena \cite{ishimori1982solitons}. In particular, the propagation of weakly nonlinear internal long waves in a fluid with a thin region of stratification can be represented by the Benjamin-Ono equation. Originating from the modeling of waves in shallow water, it offers insights into the behavior of these waves, including their propagation and interaction.
Furthermore, we mention that it defines a Hamiltonian system, and with the help of the inverse scattering method (see \cite{fokas1983inverse}), families of localized solitary wave solutions, called soliton solutions \cite{case1979benjamin}, can be obtained under the appropriate assumptions on the initial data. Since the Benjamin-Ono equation is completely integrable, it admits infinitely many conserved quantities \cite{case1979benjamin}. 

The investigation into the well-posedness of the Cauchy problem \eqref{BOeqn} associated to the  Benjamin-Ono equation has been the subject of extensive research over the years. Pioneering work in the local well-posedness was conducted by I{\'o}rio \cite{jose1986cauchy} for the initial data in $H^{s}(\R)$, $s>3/2$, and making use of the conserved quantities, the global well-posedness for data in $H^{s}(\R)$, $s\geq 2$ is demonstrated.
Tao in \cite{tao2004global} also obtained the global well-posedness in $H^1(\R)$ by introducing the gauge transformation. The idea of gauge transformation given in \cite{tao2004global} was further improved by Kenig et al. \cite{ionescu2007global} to carry out the local well-posedness to $H^s(\R)$ for $s\geq0$. Molinet \cite{molinet2008global} has also obtained the global well-posedness for the periodic data in $L^2(\R)$.

It is well-known that due to the effects of dispersion and nonlinear convection, finding a reliable method for the Benjamin-Ono equation is quite a challenging task.
Nevertheless, recent decades have seen the development of various numerical methods to solve the equation \eqref{BOeqn}, with only the pertinent literature referenced here. An implicit finite difference method introduced by Thomée et al. \cite{thomee1998numerical} utilizes the continuous Hilbert transform. More recently, Dutta et al. \cite{dutta2016convergence} demonstrated the convergence of the fully discrete finite difference scheme, which includes the discrete Hilbert transform, and Galtung \cite{galtung2018convergent} devised a convergent Crank-Nicolson Galerkin scheme.

The discontinuous Galerkin (DG) method, a finite element approach, was first introduced by Reed and Hill \cite{reed1973triangular} for the neutron transport equation. It was later extended by Cockburn et al. to tackle nonlinear conservation laws effectively, as detailed in \cite{cockburn2012discontinuous}. However, DG methods face challenges with equations containing higher-order derivatives, which can introduce instability and inconsistency, as noted by Hesthaven \cite{hesthaven2007nodal}.
To address this, Bassi and Rebay \cite{bassi1997high} adapted the DG method, introducing the Runge-Kutta DG (RKDG) variant for compressible Navier-Stokes equations. Cockburn and Shu further generalized this approach in their local discontinuous Galerkin (LDG) method \cite{cockburn1998local}, designed specifically for higher-order problems. LDG transforms equations into first-order systems by introducing auxiliary variables, which approximate lower derivatives. The ``local'' nature of LDG allows these auxiliary variables to be eliminated locally, enabling stable and efficient numerical flux design at interfaces. Thus, LDG addresses higher-order derivatives in a way that ensures stability and accurate solutions.

The LDG method has been developed to deal with equations that have higher-order derivative terms. For instance, Yan and Shu \cite{yan2002local} devised a LDG method for KdV type equations, which have third-order spatial derivatives.  They obtained the error estimates with order of convergence $k+1/2$ in the linear case. Afterwards, Xu and Shu \cite{xu2005local, xu2007error} studied nonlinear convection-diffusion type equations and observed that the LDG method still provides similar levels of accuracy and order of convergence. 
Furthermore, Xu and Shu \cite{xu2010local} expanded the LDG method to handle equations with fourth and fifth order spatial derivatives. Levy et al. \cite{levy2004local} also worked on adapting the LDG method for equations with compactly supported traveling wave solutions appearing in nonlinear dispersive equations.

In more recent times, the LDG method has become popular for dealing with partial differential equations that involve the non-local operator. Xu and Hesthaven \cite{xu2014discontinuous} came up with an LDG method that breaks down the fractional Laplacian of order $\alpha$ $(1<\alpha<2)$ into second-order derivatives and fractional integrals of order $2-\alpha$.  This method turned out to be very effective, giving the optimal rates of $k+1$ in the linear case and $k+1/2$ in the nonlinear setup.
Similarly, Aboelenen \cite{aboelenen2018high} and Dwivedi et al. \cite{dwivedi2024local}  developed an LDG method specifically designed for fractional Schr{\"o}dinger-type equations and fractional Korteweg-de Vries equations, respectively.

Research on developing fully discrete LDG schemes with stability analysis for equations involving higher-order derivatives without diffusion is quite limited. 
Recent advancements have introduced a stable fully discrete LDG method known as the high-order RK-LDG method. Various studies have investigated the stability of this method in various contexts, as seen in \cite{sun2017stability}. Recently, Hunter et al. \cite{hunter2023stability} determined the stability of a fully discrete implicit-explicit RK method for the linearized KdV equation with periodic initial data using the Fourier method. However, to the best of our knowledge, the LDG method has not been developed for the Benjamin-Ono equation.

In this paper, our approach to design the LDG scheme for the Benjamin-Ono equation \eqref{BOeqn} involves the introduction of auxiliary variables to represent \eqref{BOeqn} into the system with lower-order derivatives.
A crucial part of this process involves constructing appropriate numerical fluxes at the interior interfaces, while boundary numerical fluxes are determined by the prescribed boundary conditions.
Additionally, to develop a fully discrete LDG scheme, we discretize time using the Crank-Nicolson method. We demonstrate that the fully discrete scheme is stable for any general nonlinear flux. To the best of our knowledge, this is the first study to develop and analyze a DG method for the Benjamin-Ono equation. The proposed LDG scheme thus introduces a new framework for stable and efficient numerical approximations of this nonlocal dispersive equation.
The main ingredients of the paper are enlisted below:
 \begin{enumerate}
     \item We design a LDG scheme for the Benjamin-Ono equation \eqref{BOeqn} and establish the stability and sub-optimal order of convergence of the devised semi-discrete scheme with a general nonlinear flux. The stability analysis has also been extended for fully discrete Crank-Nicolson scheme.
     \item Furthermore, we extend our methodology to include higher order temporal discretization of equation \eqref{BOeqn} using the classical four-stage fourth-order Runge-Kutta (RK) method \cite{sun2017stability}. For the linear case, we are able to show that the proposed fully discrete scheme is \emph{strongly stable} through the \emph{two-step} and \emph{three-step strong stability} estimates.
     \item  We demonstrate the rates obtained by numerical illustrations are optimal and it preserves the conserved quantity like mass and momentum in discrete set up.
 \end{enumerate}

The rest of the paper is organized as follows. We commence our investigation by introducing a few preliminary lemmas and a semi-discrete LDG scheme and its stability and error analysis in Section \ref{sec2}. We present the stability analysis of the fully discrete Crank-Nicolson LDG scheme for general nonlinear flux and the fully discrete fourth-order RK-LDG scheme for linear flux in Section \ref{sec4}.
The efficiency of the scheme is validated through some numerical examples presented in Section \ref{sec5}. Concluding remarks and a few remarks about future work are given in Section \ref{sec6}.

 
\section{Semi-discrete LDG scheme}\label{sec2}
\subsection{Preliminary results}
Hereby we describe a few relevant properties of the Hilbert transform through the following lemma. It is worthwhile to mention that these properties are instrumental for subsequent analysis. Let $\mathcal{S}(\R)$ denote the Schwartz space.
     \begin{lemma}{(See \cite[Chapter 15]{king2009hilbert})}\label{Pilbert}
     Let $\phi\in \mathcal{S}(\R)$. Then the Hilbert transform $\mathcal H$ satisfies the following properties:
         \begin{enumerate}[label=\roman*)]
             \item Skew symmetric:
              \begin{equation*}
                  (\mathcal{H}\phi_1 , \phi_2) = - (\phi_1,\mathcal{H}\phi_2),\qquad \forall \phi_1,\phi_2 \in L^2(\R).
              \end{equation*}
             \item Commutes with derivatives:
             \begin{equation*}
                 \mathcal{H}\phi_x = (\mathcal{H}\phi)_x.
             \end{equation*}
             \item $L^2$-isometry property:
             \begin{equation*}
                 \norm{\mathcal{H}\phi}_{L^2(\R)} = \norm{\phi}_{L^2(\R)}.
             \end{equation*}
             \item  Orthogonality:
             \begin{equation*}
                 (\mathcal{H}\phi,\phi) = 0.
             \end{equation*}
         \end{enumerate}
         where $(\cdot,\cdot)$ is the standard $L^2$-inner product.
     \end{lemma}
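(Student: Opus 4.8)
The plan is to prove all four properties through the Fourier-multiplier representation of the Hilbert transform, which converts each statement into a routine manipulation of the symbol. The single ingredient I would establish first is that, for $\phi \in \mathcal{S}(\R)$,
\begin{equation*}
\widehat{\mathcal{H}\phi}(\xi) = -i\,\sgn{\xi}\,\hat{\phi}(\xi),
\end{equation*}
where $\hat{\cdot}$ denotes the Fourier transform. This follows by writing $\mathcal{H}\phi = k * \phi$ with convolution kernel $k(x) = \tfrac{1}{\pi x}$ (interpreted as a principal value), computing the tempered-distributional Fourier transform $\hat{k}(\xi) = -i\,\sgn{\xi}$, and invoking the convolution theorem. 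The symbol $m(\xi) = -i\,\sgn{\xi}$ has two features that drive everything: it is purely imaginary (so $\overline{m(\xi)} = -m(\xi)$) and it has unit modulus ($\abs{m(\xi)} = 1$).

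With this in hand, properties (i)--(iii) are immediate from Parseval's identity. For the skew symmetry (i), I would write $(\mathcal{H}\phi_1, \phi_2) = \int_{\R} m(\xi)\hat{\phi}_1(\xi)\overline{\hat{\phi}_2(\xi)}\,d\xi$, while $(\phi_1, \mathcal{H}\phi_2) = \int_{\R} \hat{\phi}_1(\xi)\,\overline{m(\xi)}\,\overline{\hat{\phi}_2(\xi)}\,d\xi$; since $\overline{m} = -m$, the two differ exactly by a sign. For commutation with the derivative (ii), both $\mathcal{H}\phi_x$ and $(\mathcal{H}\phi)_x$ carry the symbol $m(\xi)\cdot(i\xi)$, and because scalar multipliers commute the two transforms coincide, hence so do the functions. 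For the $L^2$-isometry (iii), Plancherel gives $\norm{\mathcal{H}\phi}_{L^2(\R)}^2 = \int_{\R}\abs{m(\xi)}^2\abs{\hat{\phi}(\xi)}^2\,d\xi = \int_{\R}\abs{\hat{\phi}(\xi)}^2\,d\xi = \norm{\phi}_{L^2(\R)}^2$, using $\abs{m}=1$.

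Finally, orthogonality (iv) is a corollary of (i): taking $\phi_1 = \phi_2 = \phi$ in the skew-symmetry identity and using the symmetry of the real $L^2$-inner product yields $(\mathcal{H}\phi,\phi) = -(\phi,\mathcal{H}\phi) = -(\mathcal{H}\phi,\phi)$, whence $(\mathcal{H}\phi,\phi)=0$. I expect the only genuinely delicate step to be the computation of $\hat{k}(\xi)$, since the kernel $1/(\pi x)$ is not integrable and the principal value must be handled as a tempered distribution (for instance by regularizing $1/x$ and passing to the limit, or by citing the standard identity); once the symbol is identified, every remaining argument is purely algebraic. A standard density argument then extends the identities from $\mathcal{S}(\R)$ to the general $L^2(\R)$ functions appearing in (i) and (iv).
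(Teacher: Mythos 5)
Your proposal is correct. Note that the paper does not prove this lemma at all; it is quoted as a known result with a citation to King's monograph, and the Fourier-multiplier argument you give (symbol $-i\,\sgn{\xi}$, purely imaginary and unimodular, plus Parseval/Plancherel) is precisely the standard textbook proof found in that reference, so there is no substantive difference in approach. You also correctly flagged the two points that are easy to gloss over: the principal-value kernel must be treated as a tempered distribution to compute the symbol, and property (iv) uses the symmetry of the \emph{real} $L^2$ inner product (for complex-valued functions, skew-symmetry alone only forces the real part of $(\mathcal{H}\phi,\phi)$ to vanish), which is consistent with the real-valued setting of the paper.
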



Given that the original problem is defined over the entire real line due to the involvement of a non-local operator $\mathcal{H}$. However, for the numerical purposes, following a similar approach in \cite{xu2014discontinuous,thomee1998numerical}, we restrict it to a sufficiently large bounded domain $\Omega:=[a,b]$, where $a<b$, such that $U$ has a compact support within $\Omega$ for all time $0<t<T$. Hence, it becomes imperative to impose the boundary conditions
$U(a,t) = 0 = U(b,t), \text{ for all  } t<T$.
Moreover, the properties of the Hilbert transform introduced in the Lemma \ref{Pilbert} remain applicable for a bounded domain $\Omega$, provided $\phi$ has a compact support within $\Omega$.

We partition the domain $\Omega$ into intervals $I_i = (x_{i-\frac{1}{2}},x_{i+\frac{1}{2}})$ with $a=x_{\frac{1}{2}}<x_{\frac{3}{2}}<\cdots<x_{N+\frac{1}{2}}=b$, where $N$ represents the number of elements. This partition creates a mesh of elements denoted by $\mathcal{I}$, with each element having a spatial step size $h_i = x_{i+\frac{1}{2}}-x_{i-\frac{1}{2}}$ and a maximum step size $h=\max\limits_{1\leq i\leq N}\{h_i\}$.
In conjunction with this mesh, we define the broken Sobolev spaces as follows:
\[ H^1(\Omega,\mathcal{I}) := \{v:\Omega\to \mathbb{R} \big\vert ~v|_{I_i}\in H^1(I_i),\,i=1, 2,\cdots,N\}; \]
and 
\[ L^2(\Omega,\mathcal{I}) := \{v:\Omega\to \mathbb{R} \big\vert ~ v|_{I_i}\in L^2(I_i),\,i=1, 2,\cdots,N\}. \]
Within this framework, we introduce the notation $v_{i+\frac{1}{2}}$ to represent the value of $v$ at the nodes $\{x_{i+\frac{1}{2}}\}$, and denote the one-sided limits as 
$$v^\pm_{i+\frac{1}{2}} = v(x^\pm_{i+\frac{1}{2}}):=\lim\limits_{x\to x^\pm_{i+\frac{1}{2}}} v(x).$$
We define the local inner product and local $L^2(I_i)$ norm as follows:
\begin{equation*}
    (u,v)_{I_i} = \int_{I_i} uv\,dx,\quad\quad \|u\|_{I_i} = (u,u)_{I_i}^{\frac{1}{2}}, \quad\quad (u,v) = \sum\limits_{i=1}^N (u,v)_{I_i} \quad \text{and }\quad \|u\|_{L^2(\Omega)} = \sum\limits_{i=1}^N\|u\|_{I_i}.
\end{equation*}
With all this preparation we introduce the auxiliary variables $P$ and $Q$ such that 
$$P= \mathcal{H}Q, \qquad Q = U_x.$$
As a consequence, the equation \eqref{BOeqn} can be represented in the following equivalent form of first order differential system
\begin{equation}
    \begin{split}\label{systemBO}
         U_t & = -(f(U)-P)_x,\\
    P &= \mathcal{H}Q,\\
    Q &= U_x.
    \end{split}
\end{equation}

Prior to introducing the LDG scheme, we assume that the exact solution $(U,P,Q)$ of the system \eqref{systemBO} belongs to
\[ \mathcal{T}_3\times K(\Omega,\mathcal{I}):= H^1(0,T;H^1(\Omega,\mathcal{I}))\times L^2(0,T;H^1(\Omega,\mathcal{I})) \times L^2(0,T;L^2(\Omega,\mathcal{I})). \]
This implies that the solution $(U,P,Q)$ of \eqref{systemBO} satisfies the following system:
\begin{equation}\label{exactsolun}
\begin{split}
     \left(U_t,v\right)_{I_i} & = \left(f(U) - P,v_x\right)_{I_i} - \left( f v -  P v\right)\big|_{x_{i-\frac{1}{2}}^+}^{x_{i+\frac{1}{2}}^-},\\
              \left(P,w\right)_{I_i} &= \left(\mathcal{H}Q,w\right)_{I_i},\\
              \left(Q,z\right)_{I_i}&=  -\left(U,z_x\right)_{I_i} +\left( U z\right)\big|_{x_{i-\frac{1}{2}}^+}^{x_{i+\frac{1}{2}}^-},
\end{split}
\end{equation}
for all $w\in L^2(\Omega,\mathcal{I})$, $v,z\in H^1(\Omega,\mathcal{I}) $, and for $i=1, 2,\cdots,N$.

We define the finite element $V^k\subset H^1(\Omega,\mathcal{I})$ by
\begin{equation}\label{elem_space}
    V^k = \{v\in L^2(\Omega):v|_{I_i}\in P^k (I_i),~ \forall i =1,2,\cdots,N\},
\end{equation}
where $P^k(I_i)$ is the space of polynomials of degree up to order $k$ $(\geq 1)$ on $I_i$. 
\subsection{LDG scheme}
To develop the LDG scheme for the Benjamin-Ono equation, it is necessary to define the numerical fluxes $\hat u,$ $\hat p$ and the nonlinear flux $\hat f$ at interfaces and boundaries. We introduce the following notations:
\begin{equation*}
   \{\!\!\{u\}\!\!\} = \frac{u^-+u^+}{2},\qquad \llbracket u \rrbracket = u^+-u^-.
\end{equation*}
We choose the alternative numerical flux which is given by 
\begin{align}\label{alterflux}
 \hat p = p^+,\qquad \hat u = u^-,
\end{align}
or alternatively,
\begin{align*}
 \hat p = p^-,\qquad \hat u = u^+,
\end{align*}
at interface $x_{i+\frac{1}{2}}$, $i=1,2, \cdots,N-1$. We set the boundary flux as 
\begin{equation}\label{Boundaryfluxeshilbert}
    \begin{split}
          \hat u_{N+\frac{1}{2}} = U(b,t)=0, &\qquad \hat u_{\frac{1}{2}}  = U(a,t)=0, ~\text{ for all } t<T,\\
    \hat p_{N+\frac{1}{2}} = p^-_{N+\frac{1}{2}}, &\qquad \hat p _{\frac{1}{2}} =p^+_{\frac{1}{2}}.\\
    \end{split}
\end{equation}
For the nonlinear flux $\hat f$, we can use any consistent and monotone flux \cite{yan2002local}. In particular, we consider the following  Lax-Friedrichs flux 
\begin{equation}\label{LF}
    \hat f = \hat f (u^-,u^+) = \frac{1}{2}\big(f(u^-) + f(u^+) -\delta \llbracket u\rrbracket\big),\qquad \delta = \max\limits_{u}|f'(u)|,
\end{equation}
where the maximum is taken over a range of $u$ in a relevant element.

Applying the DG approach in all the equations of the above system \eqref{exactsolun}, we design the scheme as follows: we seek an approximation $$(u,p,q)\in H^1(0,T;V^k)\times L^2(0,T;V^k)\times L^2(0,T;V^k) =: \mathcal{T}_3\times\mathcal{V}^k$$ to $(U,P,Q)$, where $U$ is an exact solution of \eqref{BOeqn} with $P = \mathcal{H}Q,~ Q = U_x ,$ such that for all test functions $(v,w,z)\in \mathcal{T}_3\times\mathcal{V}^k$ and $i=1,\cdots,N$, the following system of equations holds:
\begin{equation*}
\begin{split}
     \left(u_t,v\right)_{I_i} & = \left(f(u) - p,v_x\right)_{I_i} - \left(\hat f v - \hat p v\right)\Big|_{x_{i-\frac{1}{2}}^+}^{x_{i+\frac{1}{2}}^-},\\
              \left(p,w\right)_{I_i} &= \left(\mathcal{H}q,w\right)_{I_i},\\
              \left(q,z\right)_{I_i}&= -\left(u,z_x\right)_{I_i} + \left(\hat u z\right)\Big|_{x_{i-\frac{1}{2}}^+}^{x_{i+\frac{1}{2}}^-},\\
              \left(u^0,v\right)_{I_i} &= \left(U_0,v\right)_{I_i}.
\end{split}
\end{equation*}
The above system of equations can be rewritten as
\begin{equation}\label{LDGschemeBO}
\begin{split}
     \left(u_t,v\right)_{I_i} & =  \mathcal{F}_i(f(u),v) - \mathcal{D}_i^+(p,v),\\
              \left(p,w\right)_{I_i} &= \left(\mathcal{H}q,w\right)_{I_i},\\
              \left(q,z\right)_{I_i}&= -\mathcal{D}_i^-(u,z),\\
              \left(u^0,v\right)_{I_i} &= \left(U_0,v\right)_{I_i},
\end{split}
\end{equation}
where
\begin{align*}
    \mathcal{F}_i(f(u),v) &= \left(f(u),v_x\right)_{I_i} - \hat f{_{{i+\frac{1}{2}}}} v^-_{i+\frac{1}{2}} +  \hat f_{i-\frac{1}{2}} v^+_{i-\frac{1}{2}}, \qquad \text{ for } i=1,2,\cdots,N,\\
    \mathcal{D}_i^+(p,v) &= (p,v_x)_{I_i} - p^+_{{i+\frac{1}{2}}} v^-_{i+\frac{1}{2}} +   p^+_{i-\frac{1}{2}} v^+_{i-\frac{1}{2}}, \qquad \text{ for } i=1,2,\cdots,N-1,\\
    \mathcal{D}_i^-(u,z) &= (u,z_x)_{I_i} -  u^-_{{i+\frac{1}{2}}} v^-_{i+\frac{1}{2}} +  u^-_{i-\frac{1}{2}} v^+_{i-\frac{1}{2}}, \qquad \text{ for } i=2,3,\cdots,N-1,
\end{align*}
and
 \begin{align*}
    & \mathcal{D}_1^-(u,z) = (u,z_x)_{I_1} -  u^-_{{\frac{3}{2}}} v^-_{\frac{3}{2}},\quad \mathcal{D}_N^-(u,z) = (u,z_x)_{I_N} + u^-_{N-\frac{1}{2}} v^+_{N-\frac{1}{2}}, \\ &  
    \mathcal{D}_N^+(p,v) = (p,v_x)_{I_N} - p^-_{{N+\frac{1}{2}}} v^-_{N+\frac{1}{2}} +   p^+_{N-\frac{1}{2}} v^+_{N-\frac{1}{2}}.
 \end{align*}

The proposed LDG scheme \eqref{LDGschemeBO} for the Benjamin-Ono equation works in the following way: given $u$, we use the third equation of \eqref{LDGschemeBO} to obtain $q$ locally; more precisely, $q$ in the cell $I_i$ can be computed with the information of $u$ in the cells $I_{i-1}$ and $I_i$. Afterwards, with the help of $q$ in the cell $I_i$, one can obtain $p$ locally in the cell $I_i$. Finally, we update the approximate solution $u$ in the cell $I_i$ incorporating $p$ and $u$ in the cells $I_{i-1}$ and $I_i$. In a similar way, for the choice of alternative fluxes $\hat p = p^-,~\hat u = u^+$ the algorithm can be adopted accordingly.

Summing equation \eqref{LDGschemeBO} over $i=1,2,\cdots,N$, we have
\begin{equation}\label{LDGschemeBO1}
\begin{split}
     \left(u_t,v\right) & =  \mathcal{F}(f(u),v) + \mathcal{D}^+(p,v),\\
              \left(p,w\right) &= \left(\mathcal{H}q,w\right),\\
              \left(q,z\right)&= \mathcal{D}^-(u,z),
\end{split}
\end{equation}
where
\[ \mathcal{F} = \sum\limits_{i=1}^N  \mathcal{F}_i\quad \text{and}\quad   \mathcal{D}^\pm = -\sum\limits_{i=1}^N  \mathcal{D}^\pm_i.   \]
Consequently, we represent $\mathcal{F}$, $\mathcal{D}^\pm$ and associated numerical fluxes in the following way:
\begin{align}
    \mathcal{F}(f(u),v) & = \sum\limits_{i=1}^N(f(u),v_x)_{I_i} - \sum\limits_{i=1}^N (\hat fv)|_{x_{i-\frac{1}{2}}^+}^{x_{i+\frac{1}{2}}^-} =(f(u),v_x)+ \hat f_{\frac{1}{2}}v^+_{\frac{1}{2}} - \hat f_{N+\frac{1}{2}}v^-_{N+\frac{1}{2}} + \sum\limits_{i=1}^{N-1} \hat f_{i+\frac{1}{2}}\llbracket v \rrbracket_{i+\frac{1}{2}},\label{Fscript}\\ 
    \mathcal{D}^+(p,v)  &= -(p,v_x)-p_{\frac{1}{2}}^+v^+_{\frac{1}{2}} +  p_{N+\frac{1}{2}}^-v^-_{N+\frac{1}{2}} -\sum\limits_{i=1}^{N-1} p^+_{i+\frac{1}{2}}\llbracket v\rrbracket_{i+\frac{1}{2}}, \label{Dplus}
\end{align}
and similarly,
\begin{align}\label{upsum}
   \mathcal{D}^-(u,z)=  -(u,z_x)- \sum\limits_{i=1}^{N-1} u^-_{i+\frac{1}{2}}\llbracket z\rrbracket_{i+\frac{1}{2}}.
\end{align}
Moreover, we have the following result. 
\begin{proposition}\label{D+D_prop}
Let $u$ and $p\in V^k$. Then, we have
    \begin{equation}\label{D+D-}
    \mathcal{D}^+(p,u) + \mathcal{D}^-(u,p)  = 0.
\end{equation}
\end{proposition}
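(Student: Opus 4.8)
The plan is to expand both bilinear forms directly from their definitions in \eqref{Dplus} and \eqref{upsum}, then show the sum collapses to zero by combining the volume integrals through elementwise integration by parts and matching the outer-boundary and interior-interface contributions.

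First I would substitute $v=u$ into $\mathcal{D}^+(p,u)$ and $z=p$ into $\mathcal{D}^-(u,p)$ and add. This produces the two volume terms $-(p,u_x)-(u,p_x)$, the explicit boundary terms $-p_{\frac{1}{2}}^+u_{\frac{1}{2}}^+ + p_{N+\frac{1}{2}}^-u_{N+\frac{1}{2}}^-$, and the interface sum $-\sum_{i=1}^{N-1}\big(p_{i+\frac{1}{2}}^+\llbracket u\rrbracket_{i+\frac{1}{2}} + u_{i+\frac{1}{2}}^-\llbracket p\rrbracket_{i+\frac{1}{2}}\big)$. The strategy is then to rewrite the volume terms so that everything becomes a sum over nodal one-sided limits, after which the cancellation can be checked locally.

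The key step is the treatment of the volume terms. On each element one has $(p,u_x)_{I_i}+(u,p_x)_{I_i}=\int_{I_i}(pu)_x\,dx=(pu)_{i+\frac{1}{2}}^- - (pu)_{i-\frac{1}{2}}^+$. Summing over $i=1,\dots,N$ gives a telescoping expression whose outer-boundary part is $-p_{\frac{1}{2}}^+u_{\frac{1}{2}}^+ + p_{N+\frac{1}{2}}^-u_{N+\frac{1}{2}}^-$ and whose interior part at each node $x_{i+\frac{1}{2}}$ equals $p_{i+\frac{1}{2}}^-u_{i+\frac{1}{2}}^- - p_{i+\frac{1}{2}}^+u_{i+\frac{1}{2}}^+$. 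After negating (the volume terms enter with a minus sign), the outer-boundary contribution cancels exactly against the explicit boundary terms above.

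Finally I would verify the interior contributions cancel node by node. Using $\llbracket u\rrbracket=u^+-u^-$ and $\llbracket p\rrbracket=p^+-p^-$, a short expansion shows that at each $x_{i+\frac{1}{2}}$ one has $p^+\llbracket u\rrbracket + u^-\llbracket p\rrbracket = p^+u^+ - p^-u^-$, which coincides with the leftover telescoping contribution $p^+u^+-p^-u^-$ arising from $-(p,u_x)-(u,p_x)$; since these enter with opposite signs, every interior node contributes zero, and the identity \eqref{D+D-} follows. The only real care required is the bookkeeping of one-sided limits and signs across interfaces; there is no analytic obstacle, the result being a purely algebraic consequence of integration by parts together with the alternating flux choice $\hat p=p^+$, $\hat u=u^-$ in \eqref{alterflux}.
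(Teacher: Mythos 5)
Your proposal is correct and follows essentially the same route as the paper: expand $\mathcal{D}^+(p,u)+\mathcal{D}^-(u,p)$ from \eqref{Dplus}--\eqref{upsum}, apply elementwise integration by parts to get $(p,u_x)_{I_i}+(u,p_x)_{I_i}=(pu)\big|_{x_{i-\frac{1}{2}}^+}^{x_{i+\frac{1}{2}}^-}$, and match the resulting outer-boundary and interior-node terms against the flux terms, using the identity $p^+\llbracket u\rrbracket+u^-\llbracket p\rrbracket=p^+u^+-p^-u^-$ at each interior node. The only cosmetic difference is that the paper writes the telescoped interior sums directly in jump notation while you verify the cancellation node by node; the algebra is identical.
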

\begin{proof}
We observe that integration by parts yields the following
    \begin{align*}
       (p,u_x) + (u,p_x) &= \sum\limits_{i=1}^N\left((p,u_x)_{I_i} + (u,p_x)_{I_i}\right) =  \sum\limits_{i=1}^N(up)|_{x_{i-\frac{1}{2}}^+}^{x_{i+\frac{1}{2}}^-} \\
        &=-u^+_{\frac{1}{2}}p^+_{\frac{1}{2}} + u^-_{N+\frac{1}{2}} p^-_{N+\frac{1}{2}} - \sum\limits_{i=1}^{N-1}u^-_{i+\frac{1}{2}} \llbracket p \rrbracket_{i+\frac{1}{2}} -\sum\limits_{i=1}^{N-1}p^+_{i+\frac{1}{2}} \llbracket u \rrbracket_{i+\frac{1}{2}}.
    \end{align*}
     From \eqref{Dplus} and \eqref{upsum}, we have
    \begin{align*}
        \mathcal{D}^+(p,u) + \mathcal{D}^-(u,p) & = -(p,u_x) - (u,p_x) - p_{\frac{1}{2}}^+u^+_{\frac{1}{2}} +  p_{N+\frac{1}{2}}^-u^-_{N+\frac{1}{2}} -\sum\limits_{i=1}^{N-1} p^+_{i+\frac{1}{2}}\llbracket u\rrbracket_{i+\frac{1}{2}}\\&\qquad -  \sum\limits_{i=1}^{N-1} u^-_{i+\frac{1}{2}}\llbracket p\rrbracket_{i+\frac{1}{2}}
         =0.
    \end{align*}
Hence the result follows.
\end{proof}

To carry out the further analysis of the LDG scheme \eqref{LDGschemeBO}, we define the corresponding compact form
    \begin{align}\label{perturbationBO}
        \nonumber \mathcal{B}(u,p,q;v,w,z) &= \sum\limits_{i=1}^N\Big[\left(u_t,v\right)_{I_i} +\left(p,w\right)_{I_i} -  \left(\mathcal{H}q,w\right)_{I_i}
      + \left(q,z\right)_{I_i} \Big]\\ & \quad -\mathcal{F}(f(u),v) - \mathcal{D}^+(p,v) -\mathcal{D}^-(u,z) ,
    \end{align}
    for all $(u,p,q)\in \mathcal{T}_3\times K(\Omega,\mathcal{I})$ and $(v,w,z)\in \mathcal{T}_3\times\mathcal{V}^k$.


Hereby we analyze the stability of the proposed semi-discrete LDG scheme \eqref{LDGschemeBO} for \eqref{BOeqn}.
\subsection{Stability of semi-discrete scheme}
We prove the following stability lemma for general flux function using an appropriate compact form:
\begin{lemma}{($L^2$-stability)}\label{stablemmabo}
    Let $u,p,q$ be obtained from the LDG scheme \eqref{LDGschemeBO}. Then the LDG scheme \eqref{LDGschemeBO} for the Benjamin-Ono equation \eqref{BOeqn} is $L^2$-stable. We have the following estimate
\begin{equation}\label{stabboundBO}
    \norm{u(\cdot,T)}_{L^2(\Omega)} \leq C\norm{u^0}_{L^2(\Omega)},
\end{equation}
    for any $T>0$ and a constant $C$.
\end{lemma}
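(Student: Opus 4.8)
The plan is to derive an energy identity by testing the summed scheme \eqref{LDGschemeBO1} against a carefully chosen triple of test functions, and then to show that the dispersive (Hilbert) contributions cancel exactly while the nonlinear convective contribution is dissipative. First I would set $v = u$, $w = q$, and $z = p$ in the three equations of \eqref{LDGschemeBO1}. The first equation then reads $(u_t, u) = \mathcal{F}(f(u), u) + \mathcal{D}^+(p, u)$, and since $(u_t, u) = \tfrac12 \frac{d}{dt}\norm{u}_{L^2(\Omega)}^2$, this serves as the backbone of the energy balance.

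Next I would eliminate the dispersive terms. Testing the second equation with $w = q$ gives $(p, q) = (\mathcal{H}q, q)$, which vanishes by the orthogonality property (iv) of Lemma \ref{Pilbert}; hence $(p, q) = 0$. Testing the third equation with $z = p$ gives $(q, p) = \mathcal{D}^-(u, p)$, so $\mathcal{D}^-(u, p) = 0$. Invoking Proposition \ref{D+D_prop}, $\mathcal{D}^+(p, u) = -\mathcal{D}^-(u, p) = 0$. Substituting back, the energy identity collapses to $\tfrac12 \frac{d}{dt}\norm{u}_{L^2(\Omega)}^2 = \mathcal{F}(f(u), u)$, so the entire dispersive mechanism disappears and only the nonlinear flux term survives.

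The heart of the argument is then to show $\mathcal{F}(f(u), u) \le 0$. Using the explicit form \eqref{Fscript} with $v = u$, and introducing an antiderivative $F$ of $f$ normalized by $F(0)=0$, I would rewrite the volume term $(f(u), u_x)$ on each cell as $F(u^-_{i+\frac12}) - F(u^+_{i-\frac12})$ via the chain rule and the fundamental theorem of calculus. Summing over cells and recombining with the numerical-flux contributions, the boundary terms at $x_{\frac12}$ and $x_{N+\frac12}$ vanish because of the compact support of $u$ (so $u = 0$ and $F(u)=0$ there), and $\mathcal{F}(f(u), u)$ reduces to a sum over the interior interfaces of the quantities $\Phi_{i+\frac12} = -\int_{u^-}^{u^+} f(s)\, ds + \hat f(u^-, u^+)\,\llbracket u \rrbracket_{i+\frac12}$. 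For the Lax-Friedrichs flux \eqref{LF} I would split $\hat f\,\llbracket u\rrbracket$ into the trapezoidal part and the numerical viscosity, and bound the remainder $R = \tfrac12(f(u^-)+f(u^+))\llbracket u\rrbracket - \int_{u^-}^{u^+} f(s)\,ds$ in absolute value by $\tfrac{\delta}{2}\llbracket u\rrbracket^2$, using $\abs{f(u^\pm)-f(s)} \le \delta\,\abs{u^\pm - s}$ together with $\abs{u^- - s} + \abs{u^+ - s} = \abs{\llbracket u\rrbracket}$ for $s$ between $u^-$ and $u^+$. Since $\Phi_{i+\frac12} = R - \tfrac{\delta}{2}\llbracket u\rrbracket^2$, the viscosity dominates and $\Phi_{i+\frac12}\le 0$ at every interface.

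Combining these facts yields $\frac{d}{dt}\norm{u}_{L^2(\Omega)}^2 \le 0$, and integrating from $0$ to $T$ produces \eqref{stabboundBO}, in fact with $C=1$. I expect the interface analysis of the nonlinear term to be the main obstacle: correctly telescoping the volume integrals against the jump terms and pinning down the sign of $\Phi_{i+\frac12}$ for the monotone flux, along with the careful verification that the boundary fluxes contribute nothing under the compact-support assumption. By contrast, the cancellation of the dispersive part is essentially automatic once the test functions are fixed and Lemma \ref{Pilbert}(iv) and Proposition \ref{D+D_prop} are applied.
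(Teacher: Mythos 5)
Your proposal is correct in substance and follows the same skeleton as the paper's proof: an equivalent test-function choice, cancellation of the dispersive terms via Lemma \ref{Pilbert}(iv) and Proposition \ref{D+D_prop}, reduction to the sign of $\mathcal{F}(f(u),u)$, and the antiderivative/telescoping treatment of $(f(u),u_x)$. Your way of getting the cancellation---testing the second and third equations separately to conclude $(p,q)=0$, hence $\mathcal{D}^-(u,p)=0$ and then $\mathcal{D}^+(p,u)=0$---is just a rearrangement of the paper's choice $(v,w,z)=(u,-q,p)$ in the compact form \eqref{perturbationBO}. Where you genuinely differ is the interface inequality: the paper asserts $\llbracket F(u)\rrbracket_{i+\frac12} - \hat f_{i+\frac12}\llbracket u\rrbracket_{i+\frac12} \geq 0$ for \emph{any} consistent monotone flux (the standard E-flux property), whereas you prove it by hand for the Lax--Friedrichs flux \eqref{LF}, showing the viscosity term $\tfrac{\delta}{2}\llbracket u\rrbracket^2$ dominates the trapezoid-minus-integral remainder $R$ via $|f(u^\pm)-f(s)|\le \delta|u^\pm-s|$. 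Your computation is correct and self-contained, but it narrows the result to the LF flux; the paper's one-line appeal to monotonicity covers every consistent monotone flux. Your final step is also slightly sharper: $\tfrac{d}{dt}\norm{u}^2_{L^2(\Omega)}\le 0$ integrates directly to \eqref{stabboundBO} with $C=1$, whereas the paper invokes Gronwall's inequality, which is not needed.

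One point needs repair: you claim the boundary terms vanish ``because of the compact support of $u$, so $u=0$ and $F(u)=0$ there.'' The discrete solution $u\in V^k$ does \emph{not} vanish at $x_{\frac12}$ or $x_{N+\frac12}$; the boundary condition enters only weakly through the numerical fluxes \eqref{Boundaryfluxeshilbert}. Consequently the boundary contributions $\hat f_{\frac12}u^+_{\frac12} - F(u^+_{\frac12})$ and $F(u^-_{N+\frac12}) - \hat f_{N+\frac12}u^-_{N+\frac12}$ are not zero in general; they are, however, non-positive, by exactly your interface argument applied with exterior state $0$ (take $\hat f_{\frac12}=\hat f(0,u^+_{\frac12})$, $\hat f_{N+\frac12}=\hat f(u^-_{N+\frac12},0)$ and use $F(0)=0$). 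This is a one-line fix consistent with your own analysis, and to be fair the paper is equally terse at this point, saying only ``using the boundary conditions''; but as literally written your justification of this step is false, so do state it via the sign argument rather than via compact support of the discrete solution.
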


\begin{proof}
Given the compact form \eqref{perturbationBO} for the scheme \eqref{LDGschemeBO}, we choose the test functions $(v, w, z) = (u, -q , p)$ in \eqref{perturbationBO}, which yields
\begin{align}\label{perturbation3bo}
            \nonumber\mathcal{B}(u,p,q;u,-q,p) &= \sum\limits_{i=1}^N\Big[\left(u_t,u\right)_{I_i} -\left(p,q\right)_{I_i} +  \left(\mathcal{H}q,q\right)_{I_i}
      + \left(q,p\right)_{I_i} \Big]\\ & \quad -\mathcal{F}(f(u),u) - \mathcal{D}^+(p,u) -\mathcal{D}^-(u,p).
    \end{align}
Using the Proposition \ref{D+D_prop}, estimate \eqref{Fscript} and result from Lemma \ref{Pilbert} in \eqref{perturbation3bo}, we have
\begin{align}\label{perturbation4bo}
            \mathcal{B}(u,p,q;u,-q,p) &= \left(u_t,u\right) - \sum\limits_{i=1}^{N} \left(f(u), u_x\right)_{I_i} -\hat f_{\frac{1}{2}}u^+_{\frac{1}{2}} +\hat f_{N+\frac{1}{2}}u^-_{N+\frac{1}{2}} -\sum\limits_{i=1}^{N-1} \hat f_{i+\frac{1}{2}}\llbracket u \rrbracket_{i+\frac{1}{2}}.
    \end{align}
Let us define $F(u) = \int^u f(u)\,du$. Then we have
    \begin{equation}\label{F_ubo}
        \sum\limits_{i=1}^{N} \left(f(u), u_x\right)_{I_i} = \sum\limits_{i=1}^{N} F(u)|_{u^+_{i-\frac{1}{2}}}^{u^-_{i+\frac{1}{2}}} = - \sum\limits_{i=1}^{N-1} \llbracket F(u)\rrbracket_{i+\frac{1}{2}} - F(u)_{\frac{1}{2}} + F(u)_{N+\frac{1}{2}}.
    \end{equation}
Incorporating \eqref{F_ubo} in equation \eqref{perturbation4bo}, we have
    \begin{align}\label{perturbation5bo}
         \nonumber\mathcal{B}(u,p,q;u,-q,p)=\left(u_t,u\right)  &+\sum\limits_{i=1}^{N-1} \llbracket F(u)\rrbracket_{i+\frac{1}{2}} + F(u)_{\frac{1}{2}} - F(u)_{N+\frac{1}{2}}\\&  -\hat f_{\frac{1}{2}}u^+_{\frac{1}{2}}  +\hat f_{N+\frac{1}{2}}u^-_{N+\frac{1}{2}}- \sum\limits_{i=1}^{N-1} \hat f_{i+\frac{1}{2}}\llbracket u \rrbracket_{i+\frac{1}{2}}.
    \end{align}
Since $(u, p, q)$ satisfies the LDG scheme \eqref{LDGschemeBO}, then we have $$\mathcal{B}(u, p, q; v, w, z)=0,$$ for any $(v, w, z) \in V^k$. As a result, we get 
    \begin{align}\label{utu}
       \nonumber \left(u_t,u\right)_{L^2(\Omega)}  &+\sum\limits_{i=1}^{N-1} \llbracket F(u)\rrbracket_{i+\frac{1}{2}} + F(u)_{\frac{1}{2}} - F(u)_{N+\frac{1}{2}}\\&  -\hat f_{\frac{1}{2}}u^+_{\frac{1}{2}}  +\hat f_{N+\frac{1}{2}}u^-_{N+\frac{1}{2}}- \sum\limits_{i=1}^{N-1} \hat f_{i+\frac{1}{2}}\llbracket u \rrbracket_{i+\frac{1}{2}}=0.
    \end{align}
    Since the numerical flux $\hat f = \hat f(u^-,u^+)$ is monotone, it is non-decreasing in its first argument and non-increasing in its second argument. As a consequence, we have $$\llbracket F(u)\rrbracket_{i+\frac{1}{2}} - \hat f_{i+\frac{1}{2}} \llbracket u\rrbracket_{i+\frac{1}{2}}>0,$$ for all $i=1,2,\cdots,N-1$. Dropping the positive term from the left-hand side of equation \eqref{perturbation5bo} and using the boundary conditions, we end up with
    \begin{equation}\label{perturbation6bo}
        \begin{split}
         \frac{1}{2}\frac{d}{dt}\norm{u}^2_{L^2(\Omega)}\leq  0.
        \end{split}
    \end{equation}
 Applying the Gronwall’s inequality, we obtain
    \begin{equation*}
    \norm{u(\cdot,T)}_{L^2(\Omega)} \leq C\norm{u^0}_{L^2(\Omega)}.
    \end{equation*}
    Hence the result follows.
    \end{proof}

\subsection{Error analysis of the scheme}
To advance with the error estimates, we define the special projection operators into the finite element space \( V^k \) as follows. For any sufficiently smooth \( g \), we define:
\begin{equation}\label{projectionprop}
    \begin{split}
        \int_{I_i}\big(\mathcal{P}^- g(x) - g(x)\big)y(x)\,dx &= 0 \quad \forall ~y \in P^{k-1}(I_i), \quad  ~\text{ and }~ (\mathcal{P}^- g)^-_{i+1/2} = g(x^-_{i+1/2}),\\
        \int_{I_i}\big(\mathcal{P} g(x) - g(x)\big)y(x)\,dx &= 0 \quad \forall~ y \in P^k(I_i),
    \end{split}
\end{equation}
for all \( i=1,2,\ldots,N \). Here \( \mathcal{P}^- \) represents the projection defined above and \( \mathcal{P} \) is the standard \( L^2 \) projection. Let \( U \) be the exact solution to \eqref{BOeqn}, and let \( u \) be the approximate solution obtained through the LDG scheme \eqref{LDGschemeBO}. We introduce some compact notations for the terms involving the difference between projections and approximations by the following
$$\mathcal{P}_h^-u = \mathcal{P}^-U - u, \quad \mathcal{P}_hq = \mathcal{P}Q - q, \quad \mathcal{P}_hp = \mathcal{P}P - p,$$
and 
$$\mathcal{P}^-_eU = \mathcal{P}^-U - U, \quad \mathcal{P}_eQ = \mathcal{P}Q - Q, \quad \mathcal{P}_eP = \mathcal{P}P - P.$$ 
Note that subscripts $h$ and $e$ in the above notations indicate the differences with approximations and exact solutions respectively.

In the process of establishing the error estimate for the equation \eqref{BOeqn}, we introduce several lemmas concerning the relationship between the physical flux \( f \) and the numerical flux \( \hat{f} \).

\begin{lemma}[See Lemma 3.1 in \cite{zhang2004error}]\label{Shulemma1}
    Let $\xi\in L^2(\Omega)$ be any piecewise smooth function and $f$ be a nonzero $C^1$ flux. On each interface of elements and on the boundary points we define
    \begin{align*}
        \beta(\hat f;\xi) = \begin{cases}
           (f(\{\!\!\{\xi\}\!\!\}) - \hat f(\xi)) \llbracket\xi\rrbracket^{-1},\qquad &if~\llbracket \xi \rrbracket \neq 0,\\
            \frac{1}{2}|f'(\{\!\!\{\xi\}\!\!\})|, \qquad &if~\llbracket \xi \rrbracket = 0,
        \end{cases}
    \end{align*}
    where $\hat f(\xi)$ is a consistent and monotone numerical flux. Then $\beta(\hat f;\xi)$ is bounded and positive. 
\end{lemma}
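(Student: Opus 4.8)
The plan is to work pointwise at a fixed interface (or boundary) node, abbreviating $a=\xi^-$, $b=\xi^+$ and $m=\{\!\!\{\xi\}\!\!\}=\tfrac{a+b}{2}$, so that $\llbracket\xi\rrbracket=b-a$ and, in the non-degenerate case,
\[
\beta(\hat f;\xi)=\frac{f(m)-\hat f(a,b)}{b-a}.
\]
The only structural facts I would use are the consistency of the numerical flux, $\hat f(u,u)=f(u)$, and its monotonicity: $\hat f$ is non-decreasing in its first argument and non-increasing in its second. Consistency lets me rewrite the numerator as $\hat f(m,m)-\hat f(a,b)$, converting a mixed physical/numerical-flux difference into a pure difference of numerical-flux values that monotonicity can control. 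I would treat the two branches of the definition separately, since they need not agree in the limit $b\to a$ (for the Lax--Friedrichs flux \eqref{LF} the divided difference tends to $\tfrac{\delta}{2}$, not to $\tfrac12|f'(m)|$), so no continuity across the cases should be claimed.

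For non-negativity in the case $b\neq a$, I would telescope
\[
\hat f(m,m)-\hat f(a,b)=\bigl[\hat f(m,m)-\hat f(a,m)\bigr]+\bigl[\hat f(a,m)-\hat f(a,b)\bigr].
\]
When $b>a$ one has $a<m<b$; monotonicity in the first slot makes the first bracket $\ge0$ and monotonicity in the second slot makes the second bracket $\ge0$, so the numerator is $\ge0$, and dividing by $b-a>0$ yields $\beta\ge0$. When $b<a$ both inequalities reverse while $b-a<0$, so the quotient is again $\ge0$. In the degenerate case $b=a$ the value $\tfrac12|f'(m)|$ is non-negative outright. Thus $\beta(\hat f;\xi)\ge0$ at every node; strict positivity holds exactly where $f'$ does not vanish, which is the role of the hypothesis that $f$ is a nonzero $C^1$ flux.

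For boundedness I would exploit that $\xi$ is piecewise smooth on the bounded domain $\Omega$, so all traces $\xi^\pm$ lie in a fixed compact set $K\subset\R$. Since $f\in C^1$, the flux is Lipschitz on $K$, and a consistent monotone flux such as \eqref{LF} is Lipschitz on $K\times K$ with some constant $L$. Then, using consistency,
\[
|f(m)-\hat f(a,b)|=|\hat f(m,m)-\hat f(a,b)|\le L\bigl(|m-a|+|m-b|\bigr)=L|b-a|,
\]
so $|\beta|\le L$ whenever $b\neq a$, while $\tfrac12|f'(m)|\le\tfrac12\max_K|f'|$ covers the degenerate branch. These bounds are uniform in the node, giving the claim. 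Should one prefer, when $\hat f\in C^1$ a single mean-value identity $\beta=\tfrac12\bigl(\partial_1\hat f(\eta_1,m)-\partial_2\hat f(a,\eta_2)\bigr)$ delivers both non-negativity and boundedness at once.

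I expect the main obstacle to be the sign bookkeeping in the telescoping step: one must check that the numerator and the denominator $b-a$ always carry the same sign, which forces a separate inspection of $b>a$ and $b<a$ and a correct reading of ``non-decreasing in the first, non-increasing in the second'' in each configuration. The remaining regularity input --- Lipschitz continuity of the chosen monotone flux on the compact range of $\xi$ --- is routine for \eqref{LF} but is exactly what turns the abstract ``consistent and monotone'' hypothesis into the quantitative bound $|\beta|\le L$.
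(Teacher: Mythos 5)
Your proof is correct: the consistency step $f(\{\!\!\{\xi\}\!\!\})=\hat f(m,m)$, the telescoping with the sign bookkeeping in both cases $b>a$ and $b<a$, and the Lipschitz estimate $|\hat f(m,m)-\hat f(a,b)|\le L|b-a|$ together give exactly the claimed non-negativity and boundedness, and your remark that the two branches need not be continuous across $\llbracket\xi\rrbracket\to 0$ is also accurate. Note that the paper itself offers no proof of this statement --- it is quoted from Lemma 3.1 of Zhang and Shu \cite{zhang2004error} --- and your argument is essentially the one given in that reference (there phrased via the mean-value identity you mention as an alternative), the only implicit extra ingredient being that Lipschitz continuity of the numerical flux, which you verify for the Lax--Friedrichs flux \eqref{LF}, is in the literature normally built into the definition of a consistent monotone flux.
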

Borrowing the idea from \cite{xu2007error}, we look to estimate the nonlinear part $f(u)$ by defining 
\begin{align}
    \nonumber\sum\limits_{i=1}^N \mathcal{G}_i(f;U,u,v) &= \sum\limits_{i=1}^N \int_{I_i} \big(f(U) - f(u)\big)v_x\,dx + \sum\limits_{i=1}^N \Big(\big(f(U) - f(\{\!\!\{u\}\!\!\})\big)\llbracket v\rrbracket\Big)_{i+\frac{1}{2}}\\
   \label{Nonlinearpart} &\quad  + \sum\limits_{i=1}^N \Big((f(\{\!\!\{u\}\!\!\}) - \hat f)\llbracket v\rrbracket\Big)_{i+\frac{1}{2}}.
\end{align}
\begin{lemma}[See Corollary 3.6 in \cite{xu2007error}]\label{Shulemma2}
    Let $f\in C^3(\Omega)$ and let the operator $\mathcal{G}_i$ be defined by \eqref{Nonlinearpart}. Then we have the following estimate:
    \begin{align}\label{est_f}
        \nonumber\sum\limits_{i=1}^N \mathcal{G}_i(f;U,u,v) &\leq -\frac{1}{4}\beta(\hat f;u)\sum\limits_{i=1}^N\llbracket v\rrbracket^2_{i+\frac{1}{2}} +\big(C+C_{\ast}h^{-1}\norm{U-u}_{L^\infty(\Omega)}^2\big)h^{2k+1} \\
        &\qquad +\big(C+C_{\ast}(\norm{v}_{L^\infty(\Omega)} + h^{-1}\norm{U-u}_{L^\infty(\Omega)}^2)\big)\norm{v}_{L^2(\Omega)}^2.
    \end{align}
\end{lemma}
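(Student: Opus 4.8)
The plan is to reconstruct the argument of \cite{xu2007error} by splitting $\sum_i \mathcal{G}_i$ along the three groups appearing in its definition \eqref{Nonlinearpart},
\begin{align*}
T_1 &= \sum_i\int_{I_i}\big(f(U)-f(u)\big)v_x\,dx, \qquad
T_2 = \sum_i\big(\big(f(U)-f(\{\!\!\{u\}\!\!\})\big)\llbracket v\rrbracket\big)_{i+\frac12},\\
T_3 &= \sum_i\big(\big(f(\{\!\!\{u\}\!\!\})-\hat f\big)\llbracket v\rrbracket\big)_{i+\frac12}.
\end{align*}
Two structural facts drive everything: the exact solution $U$ is continuous, so $\llbracket U\rrbracket=0$ and $\{\!\!\{U\}\!\!\}=U$ at each node; and in the intended application $v=\mathcal{P}_h^- u$, so the error splits as $U-u=v-\mathcal{P}^-_e U$ and in particular $\llbracket u\rrbracket = \llbracket\mathcal{P}^-_e U\rrbracket-\llbracket v\rrbracket$. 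The target is to exhibit the dissipative term $-\tfrac14\beta(\hat f;u)\sum_i\llbracket v\rrbracket^2$ out of $T_3$ and to dominate $T_1,T_2$ by this dissipation, by $h^{2k+1}$ (projection error), and by multiples of $\norm{v}_{L^2(\Omega)}^2$.

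The term $T_3$ is where the good sign is produced. Lemma \ref{Shulemma1} supplies the pointwise identity $f(\{\!\!\{u\}\!\!\})-\hat f=\beta(\hat f;u)\llbracket u\rrbracket$ together with the boundedness and positivity of $\beta(\hat f;u)$, which encode the consistency and monotonicity of $\hat f$. Substituting $\llbracket u\rrbracket=\llbracket\mathcal{P}^-_e U\rrbracket-\llbracket v\rrbracket$ gives
\begin{equation*}
T_3 = -\sum_i\big(\beta(\hat f;u)\llbracket v\rrbracket^2\big)_{i+\frac12} + \sum_i\big(\beta(\hat f;u)\llbracket\mathcal{P}^-_e U\rrbracket\llbracket v\rrbracket\big)_{i+\frac12}.
\end{equation*}
The first sum is the dissipation; the cross term I would handle with Young's inequality, at the cost of a small multiple $\eps\sum_i\beta\llbracket v\rrbracket^2$ against the dissipation and leaving $C_\eps\sum_i\beta\,\llbracket\mathcal{P}^-_e U\rrbracket^2$. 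Because the special projection \eqref{projectionprop} matches the left trace exactly, $\llbracket\mathcal{P}^-_e U\rrbracket_{i+\frac12}$ is a one-sided projection error of size $\Oh(h^{k+1})$; with $\beta$ bounded and $\Oh(h^{-1})$ interfaces this sums to $\Oh(h^{2k+1})$.

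For $T_1$ and $T_2$ the strategy is to Taylor-expand $f$ about $u$ and about $\{\!\!\{u\}\!\!\}$ respectively (the hypothesis $f\in C^3$ guaranteeing bounded $f',f'',f'''$ on the relevant range), separating in each a first-order part linear in the error and a second-order remainder. The two first-order parts are compatible by design: using $U-u=v-\mathcal{P}^-_e U$, the algebraic identities $vv_x=\tfrac12(v^2)_x$ and $\{\!\!\{v\}\!\!\}\llbracket v\rrbracket=\tfrac12\llbracket v^2\rrbracket$, and integration by parts on each $I_i$, the leading volume and interface contributions combine so that the surviving interface quantities are proportional to $f'(u^\pm)-f'(\{\!\!\{u\}\!\!\})=\Oh(\llbracket u\rrbracket)$ and the surviving volume quantity to $(f'(u))_x$. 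Estimating these with the trace and inverse inequalities, controlling $\llbracket u\rrbracket$ again through $\llbracket\mathcal{P}^-_e U\rrbracket-\llbracket v\rrbracket$, and bounding the second-order remainders (which carry $(U-u)^2$) then produces the projection contribution $\Oh(h^{2k+1})$ and the prefactors $\norm{v}_{L^\infty(\Omega)}$ and $h^{-1}\norm{U-u}_{L^\infty(\Omega)}^2$ multiplying $\norm{v}_{L^2(\Omega)}^2$, after a final application of Young's inequality to the pieces that mix norms. Choosing the Young constants across $T_1,T_2,T_3$ so that a fraction $\tfrac14$ of the dissipation $\sum_i\beta\llbracket v\rrbracket^2$ survives yields \eqref{est_f}.

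The main obstacle is the bookkeeping in $T_1+T_2$: one must orchestrate the integration-by-parts and jump cancellation of the two first-order terms so that no uncontrolled interface quantity of order $\llbracket v\rrbracket^2$ or $\{\!\!\{v\}\!\!\}^2$ is left behind, and then fix the Young constants consistently across $T_1,T_2,T_3$ so that a definite fraction of the dissipation $\sum_i\beta\llbracket v\rrbracket^2$ remains on the right-hand side. The positivity and uniform boundedness of $\beta(\hat f;u)$ from Lemma \ref{Shulemma1} are exactly what permit this absorption; without the monotonicity of $\hat f$ the interface terms could not be dominated and the estimate would fail.
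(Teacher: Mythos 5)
First, a point of reference: the paper itself does not prove this lemma at all --- it is quoted (as Corollary 3.6) from Xu--Shu \cite{xu2007error} --- so your reconstruction has to be measured against the proof in that cited source. Your overall architecture agrees with it: the three-way split $T_1,T_2,T_3$ of \eqref{Nonlinearpart}, extraction of the dissipation from $T_3$ via the identity $f(\{\!\!\{u\}\!\!\})-\hat f=\beta(\hat f;u)\llbracket u\rrbracket$ from Lemma \ref{Shulemma1}, the substitution $\llbracket u\rrbracket=\llbracket\mathcal{P}^-_eU\rrbracket-\llbracket v\rrbracket$, and Young's inequality on the cross term, with the projection property $(\mathcal{P}^-_eU)^-_{i+1/2}=0$ giving the $h^{2k+1}$ contribution. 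That portion of your argument is correct and is essentially the cited one.

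The genuine gap is in $T_1+T_2$, and it comes from your choice of expansion centers combined with stopping the Taylor expansion at first order. You expand about $u$ and $\{\!\!\{u\}\!\!\}$ with a second-order remainder of the form $\tfrac12 f''(\gamma)(U-u)^2$. Since $U-u$ contains $v$, this remainder contains the term $\int f''(\gamma)\,v^2v_x\,dx$, whose coefficient --- $f''$ at a mean-value point, or $f''(u)u_x$ if you first integrate $f'(u)vv_x$ by parts --- is neither single-valued at the interfaces nor differentiable with uniformly bounded derivative. Consequently neither of the two devices that make this term tractable is available: the cubic identity $v^2v_x=\tfrac13(v^3)_x$ followed by integration by parts (it needs a smooth coefficient), and the cancellation of the resulting $\{\!\!\{v\}\!\!\}^2\llbracket v\rrbracket$ interface terms against the matching second-order terms of $T_2$ (it needs the same single-valued coefficient on both sides). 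What remains is the direct estimate $\bigl|\int f''(\gamma)v^2v_x\,dx\bigr|\le Ch^{-1}\norm{v}_{L^\infty(\Omega)}\norm{v}^2_{L^2(\Omega)}$ via the inverse inequality, and this is \emph{not} dominated by the right-hand side of \eqref{est_f}, which permits only $C_*\norm{v}_{L^\infty(\Omega)}\norm{v}^2_{L^2(\Omega)}$; note that the a priori assumption \eqref{prioriass} cannot be invoked to repair this, since the paper applies it only \emph{after} this lemma, and even under it the factor $h^{-1}\norm{v}_{L^\infty}$ is of order $h^{-1/2}$, which would ruin the subsequent Gronwall argument. The proof in \cite{xu2007error} avoids the problem by expanding to \emph{third} order about the smooth exact solution $U$: then the first- and second-order coefficients $f'(U)$ and $f''(U)$ are single-valued at interfaces (so the jump cancellations between $T_1$ and $T_2$ are exact at both orders) and smooth (so integration by parts produces bounded factors involving $U_x$, yielding precisely the $C\norm{v}^2_{L^2}$ and $C_*\norm{v}_{L^\infty}\norm{v}^2_{L^2}$ terms), while only the $f'''$-remainder is estimated crudely, producing the $C_*h^{-1}\norm{U-u}^2_{L^\infty}$ prefactors. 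This is also exactly where the hypothesis $f\in C^3$ enters; your sketch lists $f'''$ as available but never uses it, which is a symptom of the missing third-order expansion.
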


We deal with the nonlinear flux $f(u)$ by making an $a~priori$ assumption \cite{xu2007error}. Let $h$ be small enough and for $k\geq 1$, there holds
\begin{equation}\label{prioriass}
    \norm{U-u}_{L^2(\Omega)} \leq h,
\end{equation}
where $u\in V^k$ is an approximation of $U$.
The above assumption is unnecessary for linear flux $f(u) = u$.
We define the bilinear operator $\mathcal{B}_0$ by the following
\begin{align}\label{Bilinear_BO}
           \nonumber \mathcal{B}_0(u,p,q;v,w,z) &= \sum\limits_{i=1}^N\Big[\left(u_t,v\right)_{I_i} +\left(p,w\right)_{I_i} -  \left(\mathcal{H}q,w\right)_{I_i}
      + \left(q,z\right)_{I_i} \Big]  - \mathcal{D}^+(p,v) -\mathcal{D}^-(u,z)\\
          \nonumber &= \sum\limits_{i=1}^N\Big[\left(u_t,v\right)_{I_i} + \left(p,v_x\right)_{I_i}  +\left(p,w\right)_{I_i} -  \left(\mathcal{H}q,w\right)_{I_i}
          + \left(q,z\right)_{I_i} + \left(u,z_x\right)_{I_i}\Big] \\&\quad+ \mathcal{IF}_0(u,p;v,z),
    \end{align}
     where the term $\mathcal{IF}_0$ is given by
 \begin{equation}\label{Bilearflux_BO}
 \begin{split}
        \mathcal{IF}_0(u,p;v,z) := p^+_{\frac{1}{2}}v^+_{\frac{1}{2}}  - p^-_{N+\frac{1}{2}}v^-_{N+\frac{1}{2}} + \sum\limits_{i=1}^{N-1} p^+_{i+\frac{1}{2}}\llbracket v \rrbracket_{i+\frac{1}{2}}
       + \sum\limits_{i=1}^{N-1}u^-_{i+\frac{1}{2}} \llbracket z \rrbracket_{i+\frac{1}{2}}.
  \end{split}
    \end{equation}
    Note that $\mathcal{B}_0 = \mathcal{B}$ if we take $f=0$ in the definition of $\mathcal{B}$ in \eqref{perturbationBO}, that is, $\mathcal{B}_0$ is the linear part of $\mathcal{B}$. Incorporating \eqref{perturbation5bo} in \eqref{Bilinear_BO}, we have
\begin{equation}\label{Bilearflux_BO1}
        \begin{split}
            \mathcal{B}_0(u,p,q;u,-q,p)& = \left(u_t,u\right)_{L^2(\Omega)}.
        \end{split}
    \end{equation}

\begin{theorem}\label{NLerrorBO}
 Let \( U \in C^{k+1}(\Omega) \) be an exact solution of \eqref{BOeqn} and \( u \) be an approximate solution obtained using the LDG scheme \eqref{LDGschemeBO}. Furthermore, assume that $f\in C^3(\Omega)$. Then, for sufficiently small \( h \), the following error estimate holds
\begin{equation}\label{errestimateNLBO}
    \norm{U-u}_{L^2(\Omega)} \leq Ch^{k+1/2},
\end{equation}
where \( C \) is a constant depending on the fixed time \( T > 0 \), \( k\geq1 \), and the bounds on the derivatives \( |f^{(m)}| \) for \( m = 1, 2, 3 \).
\end{theorem}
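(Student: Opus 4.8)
The plan is to run the standard energy/Galerkin-orthogonality argument for LDG, combining the stability mechanism of Lemma \ref{stablemmabo} with the projection and nonlinear-flux estimates recorded in \eqref{projectionprop}--\eqref{est_f}. First I would establish Galerkin orthogonality: since the numerical fluxes $\hat f,\hat u,\hat p$ are consistent and the exact solution $(U,P,Q)$ is continuous across interfaces, it satisfies $\mathcal{B}(U,P,Q;v,w,z)=0$ for all $(v,w,z)\in\mathcal{V}^k$, exactly as $(u,p,q)$ does. Subtracting and separating the linear part $\mathcal{B}_0$ from the nonlinear flux contribution $\mathcal{F}$ yields the error identity
\[ \mathcal{B}_0(U-u,P-p,Q-q;v,w,z) = \mathcal{F}(f(U),v)-\mathcal{F}(f(u),v), \]
whose right-hand side, after using the consistency of $\hat f$ and the homogeneous boundary conditions, is precisely $\sum_i \mathcal{G}_i(f;U,u,v)$ from \eqref{Nonlinearpart}.

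Next I would split each error through the projections, writing $U-u=\mathcal{P}_h^- u-\mathcal{P}_e^- U$, $Q-q=\mathcal{P}_h q-\mathcal{P}_e Q$ and $P-p=\mathcal{P}_h p-\mathcal{P}_e P$, and test with $(v,w,z)=(\mathcal{P}_h^- u,-\mathcal{P}_h q,\mathcal{P}_h p)$, mirroring the choice $(u,-q,p)$ in the stability proof. By the algebraic identity \eqref{Bilearflux_BO1} applied to the discrete error $(\mathcal{P}_h^- u,\mathcal{P}_h p,\mathcal{P}_h q)$, the dispersive, auxiliary and Hilbert contributions telescope and the linear principal part collapses to $\tfrac12\frac{d}{dt}\norm{\mathcal{P}_h^- u}_{L^2(\Omega)}^2$. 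It then remains to bound the projection-error functional $\mathcal{B}_0(\mathcal{P}_e^- U,\mathcal{P}_e P,\mathcal{P}_e Q;\mathcal{P}_h^- u,-\mathcal{P}_h q,\mathcal{P}_h p)$. Here the two properties in \eqref{projectionprop} do the heavy lifting: the $L^2$-orthogonality of $\mathcal{P}$ annihilates the volume terms and the pairings $(\mathcal{P}_e Q,\mathcal{P}_h p)$, $(\mathcal{P}_e P,\mathcal{P}_h q)$ against $V^k$, while the interpolation property $(\mathcal{P}^- U)^-=U^-$ forces $\mathcal{D}^-(\mathcal{P}_e^- U,\mathcal{P}_h p)=0$ via Proposition \ref{D+D_prop}. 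The Hilbert term is tamed not by locality but by Lemma \ref{Pilbert}: skew-symmetry removes the diagonal piece $(\mathcal{H}\mathcal{P}_h q,\mathcal{P}_h q)=0$, and the $L^2$-isometry bounds the surviving cross term by $\norm{\mathcal{P}_e Q}_{L^2(\Omega)}\norm{\mathcal{P}_h q}_{L^2(\Omega)}\le Ch^{k+1}\norm{\mathcal{P}_h q}_{L^2(\Omega)}$. The only other residue is the interface sum $\sum_i(\mathcal{P}_e P)^+\llbracket\mathcal{P}_h^- u\rrbracket$ arising from $\mathcal{D}^+(\mathcal{P}_e P,\cdot)$, which I would estimate by a trace inequality and Young's inequality into $\epsilon\sum_i\llbracket\mathcal{P}_h^- u\rrbracket^2+Ch^{2k+1}$.

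For the nonlinear term I would invoke Lemma \ref{Shulemma2} with $v=\mathcal{P}_h^- u$: the estimate \eqref{est_f} supplies the negative jump $-\tfrac14\beta(\hat f;u)\sum_i\llbracket\mathcal{P}_h^- u\rrbracket^2$, which absorbs the $\epsilon\sum_i\llbracket\mathcal{P}_h^- u\rrbracket^2$ residue above, together with source terms of size $h^{2k+1}$ and a multiple of $\norm{\mathcal{P}_h^- u}_{L^2(\Omega)}^2$. The factors $h^{-1}\norm{U-u}_{L^\infty(\Omega)}^2$ and $\norm{\mathcal{P}_h^- u}_{L^\infty(\Omega)}$ there are kept bounded using the a priori assumption \eqref{prioriass} with the inverse inequality $\norm{\cdot}_{L^\infty(\Omega)}\le Ch^{-1/2}\norm{\cdot}_{L^2(\Omega)}$. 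To close the auxiliary contributions I would bound $\norm{\mathcal{P}_h q}_{L^2(\Omega)}$ and $\norm{\mathcal{P}_h p}_{L^2(\Omega)}$ from the error forms of the second and third equations of \eqref{LDGschemeBO} (again via \eqref{projectionprop} and Lemma \ref{Pilbert}) in terms of $\norm{\mathcal{P}_h^- u}_{L^2(\Omega)}$ and $h^{k+1}$. Collecting everything produces a Gronwall inequality
\[ \frac{d}{dt}\norm{\mathcal{P}_h^- u}_{L^2(\Omega)}^2 \le C\norm{\mathcal{P}_h^- u}_{L^2(\Omega)}^2 + Ch^{2k+1}, \]
which, since $\mathcal{P}_h^- u(\cdot,0)$ has size $h^{k+1}$, gives $\norm{\mathcal{P}_h^- u}_{L^2(\Omega)}\le Ch^{k+1/2}$; the triangle inequality with the approximation bound $\norm{\mathcal{P}_e^- U}_{L^2(\Omega)}\le Ch^{k+1}$ then yields \eqref{errestimateNLBO}.

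I expect two main obstacles. First, the nonlinear flux forces the a priori assumption \eqref{prioriass}, so the estimate is only valid a posteriori; I would therefore run a continuation/bootstrap argument verifying that the derived bound $Ch^{k+1/2}$ is itself consistent with $\norm{U-u}_{L^2(\Omega)}\le h$ for $h$ small and $k\ge1$, thereby legitimizing \eqref{prioriass}. Second, and more delicate, is the nonlocal Hilbert coupling: unlike the local dispersive terms, the cross term $(\mathcal{H}\mathcal{P}_e Q,\mathcal{P}_h q)$ cannot be eliminated by a special projection, and its control rests entirely on the isometry and skew-symmetry of Lemma \ref{Pilbert} together with the inverse-type bound for $\norm{\mathcal{P}_h q}_{L^2(\Omega)}$. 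Balancing this term, and the $\mathcal{D}^+$ interface residue, against the dissipation furnished by the monotone numerical flux is exactly where a half power of $h$ is conceded, accounting for the sub-optimal exponent $k+1/2$.
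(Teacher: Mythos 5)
Your proposal reproduces the paper's argument almost step for step: the same Galerkin orthogonality, the same splitting of the error equation into the linear part $\mathcal{B}_0$ and the nonlinear part $\sum_i\mathcal{G}_i$, the same projection decomposition and choice of test functions $(\mathcal{P}_h^-u,-\mathcal{P}_hq,\mathcal{P}_hp)$, the collapse of the left-hand side via \eqref{Bilearflux_BO1}, the annihilation of the volume terms by \eqref{projectionprop}, the control of the Hilbert cross term by the isometry in Lemma \ref{Pilbert}, the interface residues by Young's inequality, the nonlinear terms by Lemma \ref{Shulemma2} combined with the inverse inequality and the a priori assumption \eqref{prioriass}, and Gronwall at the end. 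Your explicit bootstrap to legitimize \eqref{prioriass} is a sound addition that the paper leaves implicit (deferring to \cite{xu2007error}), and your attribution of $\mathcal{D}^-(\mathcal{P}^-_eU,\mathcal{P}_hp)=0$ to Proposition \ref{D+D_prop} is only a cosmetic slip: it follows directly from \eqref{projectionprop} and the nodal interpolation property.

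The one genuine deviation is your closure of the auxiliary term $Ch^{k+1}\norm{\mathcal{P}_hq}_{L^2(\Omega)}$, and that step, as stated, would fail. The error form of the third equation of \eqref{LDGschemeBO} gives, after using \eqref{projectionprop}, the identity $(\mathcal{P}_hq,z)=\mathcal{D}^-(\mathcal{P}_h^-u,z)$ for all $z\in V^k$; since $\mathcal{D}^-$ acts as a discrete derivative, taking $z=\mathcal{P}_hq$ and applying inverse and trace inequalities yields only $\norm{\mathcal{P}_hq}_{L^2(\Omega)}\leq Ch^{-1}\norm{\mathcal{P}_h^-u}_{L^2(\Omega)}$, and the factor $h^{-1}$ cannot be removed (it is attained by mesh-scale oscillations of $\mathcal{P}_h^-u$). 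A bound of the form $\norm{\mathcal{P}_hq}_{L^2(\Omega)}\leq C\big(\norm{\mathcal{P}_h^-u}_{L^2(\Omega)}+h^{k+1}\big)$, which is what your Gronwall inequality with source $Ch^{2k+1}$ requires, is therefore not available. With the $h^{-1}$ bound the Hilbert cross term is only controlled by $Ch^{k}\norm{\mathcal{P}_h^-u}_{L^2(\Omega)}$, the source degrades to $h^{2k}$, and the final rate to $h^{k}$ — half an order short of \eqref{errestimateNLBO}. For comparison, the paper treats this same term differently: after Young's inequality it carries $\varepsilon\norm{\mathcal{P}_hq}_{L^2(\Omega)}^2$ into \eqref{Bilinear_BOerr34} and then discards it ``as $\varepsilon$ is arbitrarily small'' — itself a terse step, since $C(\varepsilon)\to\infty$ as $\varepsilon\to0$ and $\norm{\mathcal{P}_hq}_{L^2(\Omega)}$ is never independently bounded. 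So this nonlocal cross term is the weak point of both arguments, but your proposed repair does not close it at the claimed rate.
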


\begin{proof}
      We begin by deriving an error equation. Since \(U\) is an exact solution of \eqref{BOeqn}, we define
    \begin{align*}
    P = Q, \qquad Q = U_x.
    \end{align*}
     Then \(U\), \(P\), and \(Q\) satisfy the equation \eqref{perturbationBO}. Hence for any \((v,w,z)\in V^k\), we have
    \begin{align*}
        \mathcal{B}(U,P,Q;v,w,z) = \mathcal{B}(u,p,q;v,w,z) =0,
    \end{align*}
    where \((u,p,q)\) is an approximate solution obtained by the scheme \eqref{perturbationBO}. By incorporating the bilinear operator \(\mathcal{B}_0\), we get
    \begin{align*}
         0&=\mathcal{B}(U,P,Q;v,w,z) - \mathcal{B}(u,p,q;v,w,z)\\
          &=\mathcal{B}_0(U,P,Q;v,w,z) - \mathcal{B}_0(u,p,q;v,w,z)-\sum\limits_{i=1}^N \mathcal{G}_i(f;U,u,v)\\
          &=\mathcal{B}_0(U-u,P-p,Q-q;v,w,z) - \sum\limits_{i=1}^N \mathcal{G}_i(f;U,u,v).
    \end{align*}
    Taking into account the projection operators \(\mathcal{P}^-\) and \(\mathcal{P}\) defined in \eqref{projectionprop}, we choose \((v,w,z) = (\mathcal{P}_h^-u, -\mathcal{P}_hq, \mathcal{P}_hp)\). Since 
    $$U-u =\mathcal{P}^-_hu -  \mathcal{P}^-_eU, \quad P-p =\mathcal{P}_hp -  \mathcal{P}_eP \quad\text{ and } \quad Q-q =\mathcal{P}_hq -  \mathcal{P}_eQ,$$ we have
    \begin{equation}\label{A_err}
        \begin{split}
             \mathcal{B}_0(\mathcal{P}_h^-u,\mathcal{P}_hp,\mathcal{P}_hq;\mathcal{P}_h^-u, -\mathcal{P}_hq, \mathcal{P}_hp)
         &= \mathcal{B}_0(\mathcal{P}^-_eU,\mathcal{P}_eP,\mathcal{P}_eQ;\mathcal{P}_h^-u, -\mathcal{P}_hq, \mathcal{P}_hp)\\&\quad+ \sum\limits_{i=1}^N \mathcal{G}_i(f;U,u,\mathcal{P}_h^-u).
        \end{split}
    \end{equation}
     We estimate the first term on the right-hand side of \eqref{A_err}. From equation \eqref{Bilinear_BO}, we have
       \begin{equation}\label{Bilinear_BOerr}
        \begin{split}
            \mathcal{B}_0(\mathcal{P}^-_eU,\mathcal{P}_eP,\mathcal{P}_eQ;\mathcal{P}_h^-u, -\mathcal{P}_hq, \mathcal{P}_hp) &= \sum\limits_{i=1}^N\Big[\left((\mathcal{P}^-_eU)_t,\mathcal{P}_h^-u\right)_{I_i}  + \left(\mathcal{P}_eP,(\mathcal{P}_h^-u)_x\right)_{I_i} \\&\quad -\left(\mathcal{P}_eP,\mathcal{P}_hq\right)_{I_i} +  \left(\mathcal{H}\mathcal{P}_eQ,\mathcal{P}_hq\right)_{I_i}
         \\&\quad + \left(\mathcal{P}_eQ,\mathcal{P}_hp\right)_{I_i} + \left(\mathcal{P}^-_eU,(\mathcal{P}_hp)_x\right)_{I_i}\Big] \\&\quad+ \mathcal{IF}_0(\mathcal{P}^-_eU,\mathcal{P}_eP;\mathcal{P}_h^-u,\mathcal{P}_hp).
        \end{split}
    \end{equation}
    Since we have
 \begin{align*}
     (\mathcal{P}_hp)_x \in P^{k-1}(I_i),~(\mathcal{P}_h^-u)_x \in P^{k-1}(I_i),\quad \text{and} \quad   \mathcal{P}_hp,~\mathcal{P}_hq\in P^{k}(I_i),
 \end{align*}
and consequently, from the projection properties defined in \eqref{projectionprop} implies
\begin{align*}
   \left(\mathcal{P}_eP,(\mathcal{P}_h^-u)_x\right)_{I_i} =0 , \quad \left(\mathcal{P}_eQ,\mathcal{P}_hp\right)_{I_i} =0 , \quad  \left(\mathcal{P}^-_eU,(\mathcal{P}_hp)_x\right)_{I_i} =0 ,
\end{align*}
for all \(i=1, 2,\cdots,N\), and $(\mathcal{P}^-_eU)^-_{i+\frac{1}{2}} = 0$ for all \(i=1, 2,\cdots,N-1\). Furthermore, we observe that 
\begin{equation}
    \sum\limits_{i=1}^N\left(\mathcal{P}_eP-\mathcal{H}\mathcal{P}_eQ,\mathcal{P}_hq\right)_{I_i}\leq \|(\mathcal{P}P-P)-\mathcal{H}(\mathcal{P}Q-Q)\|_{L^2(\Omega)}\|\mathcal{P}_hq\|_{L^2(\Omega)}\leq C(\varepsilon)h^{2k+2} + \varepsilon\|\mathcal{P}_hq\|_{L^2(\Omega)}^2,
\end{equation}
where we have used the isometry of the Hilbert transform from Lemma \ref{Pilbert} and the projection property, and  the Young's inequality. Moreover, from the approximation theory on the point values associated with the projection operators \cite[Section 3.2]{Ciarlet}, we have
\begin{align*}
    \left(\mathcal{P}_eP\right)^+_{i+\frac{1}{2}} \leq Ch^{k+1},\quad (\mathcal{P}^-_eU)^-_{i-\frac{1}{2}} \leq Ch^{k+1},
\end{align*}
for all $i=1, 2,\cdots,N-1$, and 
\begin{align*}
    (\mathcal{P}_h^-u)^-_{N+\frac{1}{2}} = 0, \quad (\mathcal{P}_h^-u)^+_{\frac{1}{2}}= 0,
\end{align*}
by using boundary conditions.
Combining these error bounds and using the Young's inequality, we have 
\begin{align*}
    \mathcal{IF}_0(\mathcal{P}^-_eU,\mathcal{P}_eP;\mathcal{P}_h^-u,\mathcal{P}_hp) =&  (\mathcal{P}_eP)^+_{\frac{1}{2}}(\mathcal{P}_h^-u)^+_{\frac{1}{2}}  - (\mathcal{P}_eP)^-_{N+\frac{1}{2}}(\mathcal{P}_h^-u)^-_{N+\frac{1}{2}}  \\&+\sum\limits_{i=1}^{N-1} (\mathcal{P}_eP)^+_{i+\frac{1}{2}}\llbracket \mathcal{P}_h^-u \rrbracket_{i+\frac{1}{2}}
       + \sum\limits_{i=1}^{N-1}(\mathcal{P}^-_eU)^-_{i+\frac{1}{2}} \llbracket \mathcal{P}_hp\rrbracket_{i+\frac{1}{2}}\\
     \leq&  \sum\limits_{i=1}^{N-1} \Big(C(\varepsilon)\big((\mathcal{P}_eP)^+_{i+\frac{1}{2}}\big)^2 + \varepsilon\llbracket \mathcal{P}_h^-u \rrbracket_{i+\frac{1}{2}}^2\Big)\\
     \leq& C(\Omega)h^{2k+1} + \varepsilon\sum\limits_{i=1}^{N-1} \llbracket \mathcal{P}_h^-u \rrbracket_{i+\frac{1}{2}}^2,
\end{align*}
where $\varepsilon >0$. Using the above estimates in \eqref{Bilinear_BOerr}, we end up with 
\begin{equation}\label{Bilinear_BOerr2}
        \begin{split}
            \mathcal{B}_0(\mathcal{P}^-_eU,\mathcal{P}_eP,\mathcal{P}_eQ;\mathcal{P}_h^-u, -\mathcal{P}_hq, \mathcal{P}_hp) &\leq\left((\mathcal{P}^-_eU)_t,\mathcal{P}_h^-u\right)_{L^2(\Omega)}+ C(\Omega)h^{2k+1} + \varepsilon\sum\limits_{i=1}^{N-1} \llbracket \mathcal{P}_h^-u \rrbracket_{i+\frac{1}{2}}^2.
        \end{split}
    \end{equation}
 Incorporating estimates from \eqref{Bilearflux_BO1} and  \eqref{Bilinear_BOerr2} in \eqref{A_err}, we have

\begin{align}\label{Bilinear_BOerr34}
           \nonumber \left((\mathcal{P}_h^-u)_t,\mathcal{P}_h^-u\right)_{L^2(\Omega)}
           \leq& \left((\mathcal{P}^-_eU)_t,\mathcal{P}_h^-u\right)_{L^2(\Omega)}+ C(\Omega) h^{2k+1}  + \varepsilon\sum\limits_{i=1}^{N-1} \llbracket \mathcal{P}_h^-u \rrbracket_{i+\frac{1}{2}}^2\\&\quad +\varepsilon\|\mathcal{P}_hq\|_{L^2(\Omega)}^2 + \sum\limits_{i=1}^N \mathcal{G}_i(f;U,u,\mathcal{P}_h^-u).
\end{align}
With the help of estimate \eqref{est_f} and as $\varepsilon$ is arbitrarily small, the estimate \eqref{Bilinear_BOerr34} reduces to
\begin{equation}\label{Bilinear_BOerr3}
        \begin{split}
            &\left((\mathcal{P}_h^-u)_t,\mathcal{P}_h^-u\right)_{L^2(\Omega)}+\frac{1}{4}\beta(\hat f;\mathcal{P}_h^-u)\sum\limits_{i=1}^N\llbracket \mathcal{P}_h^-u\rrbracket^2_{i+\frac{1}{2}} \\
            &\qquad\leq \left((\mathcal{P}^-_eU)_t,\mathcal{P}_h^-u\right)_{L^2(\Omega)}+ C(\Omega) h^{2k+1} + \Big(C+C_{\ast}h^{-1}\norm{U-u}_{L^\infty(\Omega)}^2\Big)h^{2k+1}\\
            &\qquad\qquad +\Big(C+C_{\ast}\big(\norm{\mathcal{P}_h^-u}_{L^\infty(\Omega)} + h^{-1}\norm{U-u}_{L^\infty(\Omega)}^2\big)\Big)\norm{\mathcal{P}_h^-u}_{L^2(\Omega)}^2 .
        \end{split}
    \end{equation}
Utilizing the inverse inequality $\norm{u}_{L^\infty(\Omega)}\leq h^{-1/2}\norm{u}_{L^2(\Omega)}$ and a priori assumption \eqref{prioriass}, we obtain the estimate
\begin{equation}\label{estnonl}
    h^{-1}\norm{U-u}_{L^{\infty}(\Omega)}^2h^{2k+1} \leq h^{-2} \norm{U-u}^2_{L^2(\Omega)}h^{2k+1}\leq h^{2k+1}.
\end{equation}
  Using  the above estimate and the positivity of $\beta$ from Lemma \ref{Shulemma1}, equation \eqref{Bilinear_BOerr3} implies
    \begin{align*}
         &\frac{1}{2}\frac{d}{dt}\norm{\mathcal{P}_h^-u}^2_{L^2(\Omega)}\leq \left((\mathcal{P}^-_eU)_t,\mathcal{P}_h^-u\right)_{L^2(\Omega)} + C({\Omega})h^{2k+1} +C \norm{\mathcal{P}_h^-u}^2_{L^2(\Omega)}.
    \end{align*}
Using the standard approximation theory associated to the projection \cite[Section 3.2]{Ciarlet}, we have $\norm{\mathcal{P}_h^-u(\cdot,0)}^2_{L^2(\Omega)}=0$. Finally, with the help of Gronwall’s inequality, we obtain the estimate 
     \begin{equation*}
    \norm{U-u}_{L^2(\Omega)}\leq Ch^{k+1/2}.
\end{equation*}
This completes the proof.
    \end{proof}

\section{Stability of fully discrete LDG scheme}\label{sec4}
\subsection{Stability analysis of Crank-Nicolson fully discrete LDG scheme}
To develop the fully discrete local discontinuous Galerkin (LDG) scheme, we utilize the Crank-Nicolson method for time discretization in \eqref{LDGschemeBO1}. We partition the time domain using a time step \(\tau \). Let \(\{t_n = n\tau \}_{n=0}^{M}\) be the partition of the given time interval \([0, T]\), and define \(u^n = u(t_n)\) and \(u^{n+\frac{1}{2}} = \frac{1}{2}(u^{n+1} + u^n)\).

The fully discrete LDG scheme is designed as follows: Given \(u^n\) find \(u^{n+1}\) such that the following system holds
\begin{equation}\label{fdLDGschemeBO1}
\begin{split}
    \left(u^{n+1}, v\right) &= \left(u^{n}, v\right) + 
    \tau  \mathcal{F}(f(u^{n+\frac{1}{2}}), v) + \tau  \mathcal{D}^+(p^{n}, v),\\
    \left(p^{n}, w\right) &= \left(\mathcal{H} q^{n}, w\right),\\
    \left(q^{n}, z\right) &= \mathcal{D}^-(u^{n+\frac{1}{2}}, z),
\end{split}
\end{equation}
for all \(v, w, z \in V^k\) and $n=1,2,\cdots,M-1$, and set $u^0 = \mathcal{P}^-u_0$. Numerical fluxes at interfaces and boundaries involved in the above scheme can be defined in a similar way as in \eqref{alterflux}-\eqref{LF}.
\begin{lemma}
    The fully discrete scheme \eqref{fdLDGschemeBO1} is \(L^2\)-stable, and the solution \(u^n\) obtained by the scheme \eqref{fdLDGschemeBO1} satisfies
    \begin{equation}\label{stabl2}
        \|u^{n+1}\|_{L^2(\Omega)} \leq \|u^{n}\|_{L^2(\Omega)}, \qquad \forall n.
    \end{equation}
\end{lemma}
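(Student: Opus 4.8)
The plan is to replicate the energy argument of the semi-discrete stability Lemma \ref{stablemmabo}, exploiting the fact that the Crank-Nicolson midpoint evaluation produces an exact telescoping of the $L^2$ norm. Rewriting the first equation of \eqref{fdLDGschemeBO1} as $(u^{n+1}-u^n, v) = \tau\mathcal{F}(f(u^{n+\frac12}),v) + \tau\mathcal{D}^+(p^n,v)$, I would test with $(v,w,z) = (u^{n+\frac12}, -q^n, p^n)$, the discrete analogue of the choice $(u,-q,p)$ used in the semi-discrete proof. The left-hand side then becomes $(u^{n+1}-u^n, u^{n+\frac12}) = \frac12\big(\norm{u^{n+1}}_{L^2(\Omega)}^2 - \norm{u^n}_{L^2(\Omega)}^2\big)$, since $u^{n+\frac12} = \frac12(u^{n+1}+u^n)$ and $(a-b,a+b)=\|a\|^2-\|b\|^2$.

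Next I would dispose of the dispersive coupling. Testing the second equation of \eqref{fdLDGschemeBO1} with $w=-q^n$ gives $(p^n,q^n) = (\mathcal{H}q^n,q^n) = 0$ by the orthogonality property iv) of Lemma \ref{Pilbert}. Testing the third equation with $z=p^n$ gives $(q^n,p^n) = \mathcal{D}^-(u^{n+\frac12},p^n)$, so the vanishing of $(p^n,q^n)$ forces $\mathcal{D}^-(u^{n+\frac12},p^n)=0$. Invoking Proposition \ref{D+D_prop} with $u=u^{n+\frac12}$ and $p=p^n$, namely $\mathcal{D}^+(p^n,u^{n+\frac12}) + \mathcal{D}^-(u^{n+\frac12},p^n)=0$, I conclude $\mathcal{D}^+(p^n,u^{n+\frac12})=0$. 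Hence the entire right-hand side collapses to $\tau\mathcal{F}(f(u^{n+\frac12}),u^{n+\frac12})$, and the energy balance reads $\frac12\big(\norm{u^{n+1}}_{L^2(\Omega)}^2 - \norm{u^n}_{L^2(\Omega)}^2\big) = \tau\mathcal{F}(f(u^{n+\frac12}),u^{n+\frac12})$.

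It then remains to show the flux term is non-positive. Using the representation \eqref{Fscript} together with the primitive $F$ and the identity \eqref{F_ubo} exactly as in the semi-discrete proof, $\mathcal{F}(f(u^{n+\frac12}),u^{n+\frac12})$ reduces, after cancellation of the boundary contributions via the homogeneous boundary conditions and $F(0)=0$, to $-\sum_{i=1}^{N-1}\big(\llbracket F(u^{n+\frac12})\rrbracket_{i+\frac12} - \hat f_{i+\frac12}\llbracket u^{n+\frac12}\rrbracket_{i+\frac12}\big)$. Since $\hat f$ is a consistent monotone flux (for instance the Lax-Friedrichs flux \eqref{LF}), each summand satisfies the entropy inequality $\llbracket F\rrbracket - \hat f\llbracket u\rrbracket \ge 0$, whence $\mathcal{F}(f(u^{n+\frac12}),u^{n+\frac12})\le 0$. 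Substituting back yields $\norm{u^{n+1}}_{L^2(\Omega)}^2 \le \norm{u^n}_{L^2(\Omega)}^2$, which is precisely the claimed estimate \eqref{stabl2}.

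I expect the only genuinely delicate point, and the one worth emphasizing, to be that this argument delivers an \emph{unconditional} contraction with constant exactly $1$, rather than the Gronwall-type bound of the semi-discrete Lemma \ref{stablemmabo}. This sharper conclusion hinges on two exact cancellations rather than mere estimates: the pointwise vanishing of the dispersive terms through Lemma \ref{Pilbert} and Proposition \ref{D+D_prop}, and the strict sign of the nonlinear flux term. Consequently no time-step restriction on $\tau$ is required, in contrast to the RK-LDG scheme treated later. A secondary caveat is that \eqref{fdLDGschemeBO1} is an implicit nonlinear system, so the estimate presupposes the existence of $u^{n+1}$; the stability bound itself, however, is independent of how that nonlinear solve is performed and holds for any solution of \eqref{fdLDGschemeBO1}.
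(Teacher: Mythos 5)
Your proposal is correct and follows essentially the same route as the paper: test with $(v,w,z)=(u^{n+\frac12},-q^n,p^n)$, kill the dispersive terms via the orthogonality property of $\mathcal{H}$ (Lemma \ref{Pilbert}) together with Proposition \ref{D+D_prop}, and bound the remaining flux term by the monotonicity/entropy inequality $\llbracket F\rrbracket - \hat f\llbracket u\rrbracket \ge 0$ exactly as in the semi-discrete Lemma \ref{stablemmabo}. The only cosmetic difference is that you show $\mathcal{D}^-(u^{n+\frac12},p^n)$ and $\mathcal{D}^+(p^n,u^{n+\frac12})$ vanish individually, whereas the paper adds the three tested equations and lets the same terms cancel in aggregate; your closing remarks on unconditional contraction and on existence of the implicit solve are accurate observations consistent with the paper's statement.
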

\begin{proof}
 We choose test functions \((v, w, z) = (u^{n+\frac{1}{2}}, -q^{n}, p^{n})\) in \eqref{fdLDGschemeBO1}, and adding all three equations yields
\begin{align*}
   \left(\frac{u^{n+1} - u^n}{\tau }, u^{n+\frac{1}{2}}\right)&= 
\mathcal{F}(f(u^{n+\frac{1}{2}}), u^{n+\frac{1}{2}}) +  \mathcal{D}^+(p^{n}, u^{n+\frac{1}{2}})+ \left(p^{n}, q^{n}\right) - \left(\mathcal{H} q^{n}, q^{n}\right)\\&\qquad-
    \left(q^{n}, p^{n}\right) + \mathcal{D}^-(u^{n+\frac{1}{2}}, p^{n}).
\end{align*}
 Using Lemma \ref{Pilbert}, Proposition \ref{D+D_prop} and monotonicity of the numerical flux \(\hat{f}^{n+\frac{1}{2}}\) in the above equation, we obtain
    \begin{align*}
        \left(\frac{u^{n+1} - u^n}{\tau }, u^{n+\frac{1}{2}}\right) \leq 0.
    \end{align*}
    This implies
    \begin{equation*}
        \|u^{n+1}\|_{L^2(\Omega)} \leq \|u^{n}\|_{L^2(\Omega)}, \qquad \forall n=0,1,\dots,M-1.
    \end{equation*}
    This completes the proof.
\end{proof}

\subsection{Stability analysis of higher order fully discrete LDG scheme}

Hereby we focus on the stability analysis with an explicit fourth-order Runge-Kutta time discretization. The spatial stability of the LDG scheme is demonstrated in Section \ref{sec2}. It is worth mentioning that the stability analysis under the higher-order time discretizations is a challenging task, and at present we have pursued this analysis exclusively in the linear case. In particular, for simplicity, we choose the zero flux function for the subsequent analysis \cite{wang2024analysis}. We would like to remark that to the best of our knowledge, the stability analysis for fourth-order LDG schemes involving general LDG operators has not been performed in the literature. The following analysis can be considered as a contribution to the stability analysis under the aforementioned assumptions.

Then the semi-discrete scheme \eqref{LDGschemeBO} corresponds to an ODE system
         \begin{equation}\label{ddtldg}
          \frac{d}{dt}u = L_h u.
         \end{equation}
where LDG operator $L_h$ can be recovered from the semi-discrete scheme \eqref{LDGschemeBO}.
We consider the four-stage explicit fourth-order RK method for time discretization \cite{sun2017stability}:
\begin{align}\label{RK4scheme}
    u^{n+1} = P_4(\tau L_h)u^n,
\end{align}
where the operator $P_4$ is given by
\begin{align*}
    P_4(\tau L_h) = I+\tau L_h +\frac{1}{2}(\tau L_h)^2 +\frac{1}{6}(\tau  L_h)^3 + \frac{1}{24}( \tau L_h)^4.
\end{align*}
We introduce a few notations which will be used frequently in subsequent stability analysis. 
We say that the LDG operator $L_h$ is semi-negative if $(v,(L_h+L_h^T)v)\leq 0$, $\forall v \in V^k$ and it is denoted by $L_h+L_h^T\leq 0$. 
We define $[u,v] := - (u,(L_h+L_h^T)v)$. It is a bilinear form on $V^k$.
For convenience, we denote $\mathcal{L}_h:=\tau  L_h$.
We introduce the notion of strong stability for the fully discrete scheme.
    \begin{definition}[Strong stability]
            Let $u^n$ be the approximate solution obtained by the fully discrete scheme \eqref{RK4scheme}. Then the scheme \eqref{RK4scheme} is said to be strongly stable if there exists an integer $n_0$, such that 
        \begin{equation}\label{strongstabdefn}
                \norm{u^{n}}_{L^2(\Omega)}\leq \norm{u^0}_{L^2(\Omega)}, \qquad \forall n\geq n_0.
        \end{equation}
\end{definition}
Hereby we state the following result associated to energy estimate which will be instrumental for further stability analysis.
\begin{lemma}[Energy equality \cite{sun2017stability}]\label{energyen}
Let $u^n$ be the solution of fully discrete scheme \eqref{RK4scheme}. Then
    \begin{equation*}
        \norm{u^{n+1}}^2_{L^2(\Omega)}-\norm{u^{n}}^2_{L^2(\Omega)} = \mathcal{Q}(u^n), \qquad \forall n\geq 1,
    \end{equation*}
    where
    \begin{equation*}
        \mathcal{Q}(u^n) = \frac{1}{576}\norm{\mathcal{L}_h^4u^{n}}^2_{L^2(\Omega)}-\frac{1}{72}\norm{\mathcal{L}_h^3u^{n}}^2_{L^2(\Omega)}+\tau \sum\limits_{i,j=0}^{3}\alpha_{ij}[\mathcal{L}_h^iu^n,\mathcal{L}_h^ju^n],
    \end{equation*}
    and
    \begin{equation*}
        A =\left(\alpha_{ij}\right)_{i,j=0}^{3}= -\begin{pmatrix}
1 & 1/2 & 1/6 & 1/24\\
1/2 & 1/3 & 1/8  &  1/24\\
1/6 & 1/8 & 1/24 & 1/48 \\
1/24 & 1/24 & 1/48 & 1/144
\end{pmatrix}.
    \end{equation*}
\end{lemma}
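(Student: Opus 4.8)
The plan is to establish the claimed energy equality by a direct expansion of the single Runge--Kutta update $u^{n+1}=P_4(\mathcal{L}_h)u^n$ in the $L^2$ inner product, followed by conversion of every cross term into the bilinear form $[\cdot,\cdot]$ through one elementary reduction identity. To organize the computation I would write $v:=u^n$ and set $G_{ij}:=(\mathcal{L}_h^i v,\mathcal{L}_h^j v)$; since the inner product is real, $G_{ij}=G_{ji}$. Using $P_4(\mathcal{L}_h)=\sum_{i=0}^4\tfrac{1}{i!}\mathcal{L}_h^i$ and $\norm{u^n}^2_{L^2(\Omega)}=G_{00}$, the left-hand side becomes
\begin{equation*}
\norm{u^{n+1}}^2_{L^2(\Omega)}-\norm{u^{n}}^2_{L^2(\Omega)}=\sum_{i,j=0}^4\frac{1}{i!\,j!}\,G_{ij}-G_{00},
\end{equation*}
a fixed linear combination of the fifteen distinct entries $G_{ij}$, $0\le i\le j\le 4$.

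The engine of the proof is the reduction identity
\begin{equation*}
\tau\,[\mathcal{L}_h^i v,\mathcal{L}_h^j v]=-(\mathcal{L}_h^i v,\mathcal{L}_h^{j+1}v)-(\mathcal{L}_h^{i+1}v,\mathcal{L}_h^j v),
\end{equation*}
which I would derive straight from the definitions. Since $[u,w]=-(u,(L_h+L_h^T)w)$ and $\mathcal{L}_h=\tau L_h$, one has $L_h\mathcal{L}_h^j v=\tfrac1\tau\mathcal{L}_h^{j+1}v$, while the adjoint relation gives $(\mathcal{L}_h^i v,L_h^T\mathcal{L}_h^j v)=(L_h\mathcal{L}_h^i v,\mathcal{L}_h^j v)=\tfrac1\tau(\mathcal{L}_h^{i+1}v,\mathcal{L}_h^j v)$ for the transpose term. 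In terms of the $G$-entries this reads $\tau[\mathcal{L}_h^i v,\mathcal{L}_h^j v]=-(G_{i,j+1}+G_{i+1,j})$. I would also record that $[\cdot,\cdot]$ is symmetric (because $L_h+L_h^T$ is self-adjoint) and that the matrix $A=(\alpha_{ij})$ is symmetric, so that $\sum_{i,j=0}^3\alpha_{ij}G_{i,j+1}=\sum_{i,j=0}^3\alpha_{ij}G_{i+1,j}$.

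With this identity the remaining work is purely algebraic. Substituting $\tau[\mathcal{L}_h^i v,\mathcal{L}_h^j v]=-(G_{i,j+1}+G_{i+1,j})$ into $\mathcal{Q}(u^n)$ turns its bilinear-form part into a combination of the $G_{ij}$ with indices in $\{0,\dots,4\}$, namely $\tau\sum_{i,j=0}^3\alpha_{ij}[\mathcal{L}_h^i v,\mathcal{L}_h^j v]=-2\sum_{i,j=0}^3\alpha_{ij}\,G_{i,j+1}$ after using the symmetry above. It then suffices to check that this combination, together with the two explicit norm terms $\tfrac{1}{576}G_{44}-\tfrac{1}{72}G_{33}$, reproduces exactly the coefficients of $\sum_{i,j}\tfrac{1}{i!\,j!}G_{ij}$ found in the first step. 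I would verify this entry by entry: the $G_{44}$ coefficient matches trivially since $\tfrac{1}{576}=(1/24)^2$; the $G_{33}$ coefficient works out because $-\tfrac{1}{72}+\tfrac{1}{24}=\tfrac{1}{36}=(1/6)^2$; and the remaining thirteen coefficients, all produced by the reduction of the bilinear-form part, agree with those of the expansion. The content of the lemma is precisely that the matrix $A$ is chosen so that this matching succeeds.

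The main obstacle is not conceptual but the bookkeeping in this last step: one must track how the reduction identity redistributes weight among the $G_{ij}$ so that the low-order norm contributions $G_{00}=\norm{u^n}^2_{L^2(\Omega)}$, $G_{11}=\norm{\mathcal{L}_h u^n}^2_{L^2(\Omega)}$ and $G_{22}=\norm{\mathcal{L}_h^2 u^n}^2_{L^2(\Omega)}$ are absorbed into bilinear-form terms and cancel, leaving only the two top-order norms $\norm{\mathcal{L}_h^3 u^n}^2_{L^2(\Omega)}$ and $\norm{\mathcal{L}_h^4 u^n}^2_{L^2(\Omega)}$ outside the bracket. Organizing the expansion by total degree $m=i+j$ and matching coefficients degree by degree is the cleanest way to control this cancellation and to confirm the exact entries of $A$.
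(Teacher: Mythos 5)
Your proposal is correct, but note that the paper itself does not prove this lemma: it is quoted from Sun and Shu \cite{sun2017stability}, so there is no in-paper argument to compare against. Your direct expansion is essentially the computation behind the cited result, run in reverse: one can either expand $\norm{P_4(\mathcal{L}_h)u^n}^2_{L^2(\Omega)}$ and repeatedly use $G_{i,j+1}+G_{i+1,j}=-\tau[\mathcal{L}_h^i u^n,\mathcal{L}_h^j u^n]$ to convert cross terms into brackets, or, as you do, expand the brackets in $\mathcal{Q}(u^n)$ back into the $G_{ij}$ and match coefficients; the two computations are identical in content. I checked the thirteen coefficients you left implicit, and they do all match. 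Writing $\beta_{ij}:=-\alpha_{ij}$, the reduction contributes $2\sum_{i=0}^{3}\sum_{k=1}^{4}\beta_{i,k-1}G_{ik}$, so for $G_{12}$ the left-hand side gives $2/(1!\,2!)=1$ versus $2(\beta_{11}+\beta_{20})=2(1/3+1/6)=1$; for $G_{13}$: $2/(1!\,3!)=1/3$ versus $2(\beta_{12}+\beta_{30})=2(1/8+1/24)=1/3$; for $G_{23}$: $2/(2!\,3!)=1/6$ versus $2(\beta_{22}+\beta_{31})=2(1/24+1/24)=1/6$; for $G_{11}$: $1$ versus $2\beta_{10}=1$; for $G_{22}$: $1/4$ versus $2\beta_{21}=1/4$; and $G_{00}$ has coefficient $1-1=0$ on the left and never appears on the right, since the reduction only produces $G_{i,j+1}$ with $j+1\geq 1$. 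Two small remarks: your symmetry step $\sum_{i,j}\alpha_{ij}G_{i+1,j}=\sum_{i,j}\alpha_{ij}G_{i,j+1}$ uses both the symmetry of $A$ and the symmetry of $G$ (both hold, but both should be invoked); and the identity requires no semi-negativity of $L_h$ whatsoever — it is a purely algebraic consequence of $u^{n+1}=P_4(\mathcal{L}_h)u^n$ and the definition of $[\cdot,\cdot]$, which your argument correctly reflects by never using $L_h+L_h^T\leq 0$.
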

In Lemma \ref{energyen}, $\mathcal{Q}(u^n)$ is referred as the energy change of the approximate solution, consisting of two components: the numerical dissipation $\frac{1}{576}\norm{\mathcal{L}_h^4u^{n}}^2_{L^2(\Omega)}-\frac{1}{72}\norm{\mathcal{L}_h^3u^{n}}^2_{L^2(\Omega)}$ and the quadratic form $\tau \sum\limits_{i,j=0}^{3}\alpha_{ij}[\mathcal{L}_h^iu^n,\mathcal{L}_h^ju^n]$. The next lemma establishes the negativity of the quadratic form, also provides conditions for the strong stability.
\begin{lemma}[See Lemma 2.4 in \cite{sun2017stability}]\label{negativeQ}
    Let $L_h$ be a semi-negative operator and 
    \begin{equation}
        \mathcal{Q}_1(u) = \zeta\norm{\mathcal{L}_h^3(u)}_{L^2(\Omega)} + \tau \sum\limits_{i,j=0}^m \bar \alpha_{ij} [\mathcal{L}_h^iu,\mathcal{L}_h^ju],
    \end{equation}
    where $\bar\alpha_{ij}=\bar\alpha_{ji}$ and $m\geq 2$. If $\zeta<0$ and $\bar A = \left(\bar\alpha\right)_{i,j=0}^2$ is negative definite, then there exists a constant $c_0>0$ such that $\mathcal{Q}_1(u)\leq 0$ provided $\norm{\mathcal{L}_h}\leq c_0$, where $c_0$ is independent of $\tau$ and $h$.
\end{lemma}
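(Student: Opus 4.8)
The plan is to exploit the two distinct mechanisms available for forcing $\mathcal{Q}_1$ to be nonpositive: the genuine $L^2$-dissipation carried by the leading term (read as $\zeta\norm{\mathcal{L}_h^3 u}_{L^2(\Omega)}^2$ with $\zeta<0$, matching the dissipation in the energy equality of Lemma \ref{energyen}), and the negative-definiteness of the principal block $\bar A=(\bar\alpha_{ij})_{i,j=0}^2$ acting through the bilinear form $[\cdot,\cdot]$. First I would record the structural facts. Since $L_h+L_h^T\leq 0$, the form $[\phi,\psi]=-(\phi,(L_h+L_h^T)\psi)$ is symmetric and positive semidefinite, so Cauchy--Schwarz $[\phi,\psi]\leq[\phi,\phi]^{1/2}[\psi,\psi]^{1/2}$ holds. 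The decisive observation is that the explicit factor $\tau$ in front of the double sum cancels the $\tau^{-1}$ hidden in $L_h$: since $\tau(L_h+L_h^T)=\mathcal{L}_h+\mathcal{L}_h^T$, one gets $\tau[\mathcal{L}_h^i u,\mathcal{L}_h^i u]\leq\norm{\mathcal{L}_h+\mathcal{L}_h^T}\,\norm{\mathcal{L}_h^i u}_{L^2(\Omega)}^2\leq 2\norm{\mathcal{L}_h}\,\norm{\mathcal{L}_h^i u}_{L^2(\Omega)}^2$, a bound whose constant is free of $\tau$ and $h$. Combined with $\norm{\mathcal{L}_h^i u}_{L^2(\Omega)}\leq\norm{\mathcal{L}_h}^{\,i-3}\norm{\mathcal{L}_h^3 u}_{L^2(\Omega)}$ for $i\geq 3$, this converts every high-order contribution into a multiple of $\norm{\mathcal{L}_h^3 u}_{L^2(\Omega)}^2$ weighted by a positive power of $\norm{\mathcal{L}_h}$.

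Next I would split the double sum into the principal block $S_1=\tau\sum_{i,j=0}^2\bar\alpha_{ij}[\mathcal{L}_h^i u,\mathcal{L}_h^j u]$ and the remainder $S_2$ collecting all terms with $\max(i,j)\geq 3$. For $S_1$, viewing $G_{ij}:=[\mathcal{L}_h^i u,\mathcal{L}_h^j u]$ as the Gram matrix of $\{\mathcal{L}_h^i u\}_{i=0}^2$ in the semi-inner product $[\cdot,\cdot]$, it is positive semidefinite, and the elementary inequality $\sum_{i,j=0}^2\bar\alpha_{ij}G_{ij}=\operatorname{tr}(\bar A G)\leq\lambda_{\max}(\bar A)\operatorname{tr}(G)$, valid for symmetric $\bar A$ and $G\succeq 0$, yields $S_1\leq-c\,\tau\sum_{i=0}^2[\mathcal{L}_h^i u,\mathcal{L}_h^i u]$ with $c:=-\lambda_{\max}(\bar A)>0$ by negative-definiteness. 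This builds a reservoir of negative mass in the low-order diagonal entries.

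For $S_2$ I would bound each term by Cauchy--Schwarz and Young's inequality. When both indices exceed $2$, the first-paragraph estimates give $\leq C\norm{\mathcal{L}_h}^{\,i+j-5}\norm{\mathcal{L}_h^3 u}^2_{L^2(\Omega)}\leq C\norm{\mathcal{L}_h}\,\norm{\mathcal{L}_h^3 u}^2_{L^2(\Omega)}$ since $i+j\geq 6$ (taking $c_0\leq 1$). For a mixed term with $i\geq 3$, $j\leq 2$, Young's inequality $\tau[\mathcal{L}_h^i u,\mathcal{L}_h^j u]\leq\frac{1}{2\eps}\tau[\mathcal{L}_h^i u,\mathcal{L}_h^i u]+\frac{\eps}{2}\tau[\mathcal{L}_h^j u,\mathcal{L}_h^j u]$ routes the high-order part into $\tfrac{C}{\eps}\norm{\mathcal{L}_h}\,\norm{\mathcal{L}_h^3 u}^2_{L^2(\Omega)}$ and the low-order part into $\frac{\eps}{2}\tau[\mathcal{L}_h^j u,\mathcal{L}_h^j u]$ with $j\leq 2$. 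As there are only finitely many such terms, I would first fix $\eps$ small enough that all low-order remainders are absorbed by the reservoir $-c\,\tau\sum_{i=0}^2[\mathcal{L}_h^i u,\mathcal{L}_h^i u]$ from $S_1$, and then choose $c_0$ small enough that the accumulated factor $C(\eps)\norm{\mathcal{L}_h}\leq|\zeta|$, so that all high-order remainders are dominated by $\zeta\norm{\mathcal{L}_h^3 u}^2_{L^2(\Omega)}$. Assembling these estimates gives $\mathcal{Q}_1(u)\leq 0$ whenever $\norm{\mathcal{L}_h}\leq c_0$, with $c_0$ depending only on $\zeta$, $\bar A$, and $m$.

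The main obstacle is the degeneracy of $[\cdot,\cdot]$: being only positive semidefinite, the negative block $S_1$ cannot by itself control the high powers $\mathcal{L}_h^i u$ with $i\geq 3$, which may be nonzero where $[\mathcal{L}_h^i u,\mathcal{L}_h^i u]$ vanishes. The resolution is exactly the two-channel bookkeeping above, sending low-order debris to $S_1$ and high-order debris to the honest $L^2$ term $\zeta\norm{\mathcal{L}_h^3 u}^2_{L^2(\Omega)}$. The one point requiring genuine care is ensuring that the conversion $\tau[\mathcal{L}_h^i u,\mathcal{L}_h^i u]\leq 2\norm{\mathcal{L}_h}\,\norm{\mathcal{L}_h^i u}^2_{L^2(\Omega)}$ keeps the threshold $c_0$ independent of $\tau$ and $h$, which is precisely what makes the resulting CFL-type condition $\norm{\mathcal{L}_h}\leq c_0$ meaningful.
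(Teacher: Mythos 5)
Your proof is correct. Note that the paper itself contains no proof of Lemma \ref{negativeQ}: the lemma is imported verbatim (as Lemma 2.4) from \cite{sun2017stability}, so the only meaningful comparison is with the argument in that reference --- and yours is essentially the same argument. Both proofs rest on the same two channels you describe: the principal $3\times 3$ block is made strictly useful via the Gram-matrix inequality $\sum_{i,j=0}^{2}\bar\alpha_{ij}[\mathcal{L}_h^i u,\mathcal{L}_h^j u]\leq \lambda_{\max}(\bar A)\sum_{i=0}^{2}[\mathcal{L}_h^i u,\mathcal{L}_h^i u]$, while every term carrying an index $i\geq 3$ is converted, through $\tau[\phi,\phi]\leq 2\norm{\mathcal{L}_h}\norm{\phi}_{L^2(\Omega)}^2$ and $\norm{\mathcal{L}_h^i u}_{L^2(\Omega)}\leq \norm{\mathcal{L}_h}^{i-3}\norm{\mathcal{L}_h^3 u}_{L^2(\Omega)}$, into a multiple of $\norm{\mathcal{L}_h}\norm{\mathcal{L}_h^3 u}_{L^2(\Omega)}^2$; fixing the Young parameter $\varepsilon$ first (to be absorbed by the negative-definite block) and only then shrinking $c_0$ (so that the accumulated $C(\varepsilon)\norm{\mathcal{L}_h}$ is dominated by $|\zeta|$) is exactly what keeps $c_0$ independent of $\tau$ and $h$. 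You also handled the one genuine trap correctly: as printed, the lemma has $\zeta\norm{\mathcal{L}_h^3 u}_{L^2(\Omega)}$ unsquared, which is a typo --- with the unsquared norm the claim would fail under the scaling $u\mapsto su$, $s\to\infty$, since the quadratic form grows like $s^2$ against a linearly growing absorber. Your squared reading is the one in \cite{sun2017stability}, consistent with the dissipation term in Lemma \ref{energyen} and with how the lemma is actually invoked in the three-step stability estimate \eqref{three_step_esti}.
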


For simplicity, we are using uniform time stepping. Since the semi-discrete scheme \eqref{LDGschemeBO} is spatially stable from the Lemma \ref{stablemmabo}, that is 
\begin{equation*}
    \left(\frac{d}{dt}u,u\right) = (L_h u,u)\leq 0,
\end{equation*}
the LDG operator $L_h$ is semi-negative by the following
\begin{align*}
    \left(\frac{d}{dt}u,u\right) = (L_h u,u) = \frac{1}{2}(L_h u,u) + \frac{1}{2}(u,L_h u)
    = \frac{1}{2}((L_h+L_h^T) u,u)\leq 0,
\end{align*}
implies $L_h+L_h^T\leq 0$. 

The question of whether the classical fourth-order Runge-Kutta method is strongly stable or not remained open until Sun and Shu partially addressed it in \cite{sun2017stability}, where they provided a counterexample showing that this method is not always strongly stable for semi-negative operators. However, it is worth noting that the semi-negative operator provided in the counterexample in \cite{sun2017stability} is not a DG operator. We introduce a new approach to demonstrate the strong stability of the fourth-order, four-stage RK-LDG scheme \eqref{RK4scheme}. By proving stability of the fully discrete scheme over two and three time steps, we establish the foundation from which the strong stability result follows. More precisely, we prove the following:
\begin{theorem}\label{thm:RK4_stab_result}
     Let $L_h+L_h^T\leq 0$. Then the four-stage fourth-order RK LDG scheme \eqref{RK4scheme} is strongly stable. That is, we have 
    \begin{equation*}
        \norm{u^{n}}_{L^2(\Omega)}\leq  \norm{u^{0}}_{L^2(\Omega)}, \qquad \forall n\geq 2,
    \end{equation*}
    provided $\tau  \norm{L_h}\leq c_0$, where $c_0$ is a constant.
\end{theorem}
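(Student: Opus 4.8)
The plan is to leverage the energy equality of Lemma~\ref{energyen}, which decomposes the energy change $\mathcal{Q}(u^n)$ into a numerical dissipation part and the quadratic form $\tau\sum_{i,j=0}^{3}\alpha_{ij}[\mathcal{L}_h^iu^n,\mathcal{L}_h^ju^n]$, and to combine this with the negativity criterion of Lemma~\ref{negativeQ}. Since $L_h+L_h^T\le 0$ makes $L_h$ semi-negative and hence $[\cdot,\cdot]$ a positive semi-definite bilinear form, the bracket terms contribute with a favorable sign once the coefficient matrix is handled correctly. The difficulty the authors flag is that the full matrix $A$ in Lemma~\ref{energyen} is \emph{not} negative definite (its $3\times 3$ and $4\times 4$ structure contains the problematic top-order coefficients), so a single application of the energy equality over one step cannot yield $\mathcal{Q}(u^n)\le 0$ directly. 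This is precisely why the strong stability is claimed only for $n\ge 2$ rather than $n\ge 1$.

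First I would write out the energy equality over two consecutive steps and sum, obtaining
\begin{equation*}
    \norm{u^{n+1}}^2_{L^2(\Omega)}-\norm{u^{n-1}}^2_{L^2(\Omega)} = \mathcal{Q}(u^n)+\mathcal{Q}(u^{n-1}).
\end{equation*}
The key algebraic observation I would exploit is the relation $u^n = P_4(\mathcal{L}_h)u^{n-1}$, which lets me re-express the high-order terms $\mathcal{L}_h^i u^n$ appearing in $\mathcal{Q}(u^n)$ in terms of powers $\mathcal{L}_h^j u^{n-1}$ acting on the single state $u^{n-1}$. After this substitution the combined energy change $\mathcal{Q}(u^n)+\mathcal{Q}(u^{n-1})$ becomes a quadratic form in the collection $\{\mathcal{L}_h^j u^{n-1}\}$ together with a leading dissipative term of the form $\zeta\norm{\mathcal{L}_h^3 u^{n-1}}^2_{L^2(\Omega)}$ with $\zeta<0$. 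This is the structural setup required to invoke Lemma~\ref{negativeQ}: the two-step accumulation of the dissipation terms $-\tfrac{1}{72}\norm{\mathcal{L}_h^3 u^n}^2$ should dominate the residual positive-definite obstruction coming from the indefinite part of $A$, provided the relevant reduced $3\times 3$ coefficient matrix $\bar A$ is negative definite.

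The heart of the argument — and the step I expect to be the main obstacle — is verifying that after the two-step (and, if necessary, three-step) accumulation, the associated symmetric matrix $\bar A=(\bar\alpha_{ij})_{i,j=0}^{2}$ genuinely satisfies the negative-definiteness hypothesis of Lemma~\ref{negativeQ}, while the top-order coefficient $\zeta$ is strictly negative. I would carry this out by forming the explicit coefficient bookkeeping induced by $P_4$, collecting like powers of $\mathcal{L}_h$, and checking the leading principal minors of $\bar A$; this is where the extra step (the ``three-step strong stability'' estimate advertised in the abstract) becomes essential, since the two-step combination alone may leave a residual term that only the third step cancels. Once Lemma~\ref{negativeQ} applies, I obtain $\mathcal{Q}(u^n)+\mathcal{Q}(u^{n-1})\le 0$ for $\norm{\mathcal{L}_h}=\tau\norm{L_h}\le c_0$, hence $\norm{u^{n+1}}_{L^2(\Omega)}\le\norm{u^{n-1}}_{L^2(\Omega)}$ for all admissible $n$. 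Finally I would close the argument by a standard parity/telescoping step: iterating this two-step contraction down even and odd indices separately, and using the three-step estimate to bridge the small-index base cases, gives $\norm{u^n}_{L^2(\Omega)}\le\norm{u^0}_{L^2(\Omega)}$ for every $n\ge 2$ under the time-step constraint $\tau\norm{L_h}\le c_0$, which is exactly the assertion.
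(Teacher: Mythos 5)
Your proposal is correct and follows essentially the same route as the paper: the energy equality of Lemma~\ref{energyen}, the substitution $u^{n+1}=P_4(\mathcal{L}_h)u^n$ to recast the accumulated quadratic forms in a single state, the negativity criterion of Lemma~\ref{negativeQ} applied to the reduced $3\times 3$ coefficient matrix, and the final parity-type combination of two-step and three-step contractions to cover all $n\geq 2$. The only difference is allocation of labor: the paper quotes the two-step estimate directly from \cite[Theorem 2.1]{sun2017stability} and carries out the explicit coefficient bookkeeping (the matrices $A_0$, $A_1$, $A_2$ and the eigenvalues of their sum) only for the three-step estimate, whereas you propose to re-derive the two-step contraction by the same computation---which indeed works, as that is precisely Sun--Shu's argument.
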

Before proceeding with the proof of Theorem \ref{thm:RK4_stab_result}, we establish the three-step strong stability result.


\begin{proposition}[Three-step strong stability]
    Let $L_h+L_h^T\leq 0$. Then the four-stage fourth-order RK scheme \eqref{RK4scheme} is strongly stable in three steps. Therefore, we have 
    \begin{equation}\label{three_step_esti}
        \norm{u^{n+3}}_{L^2(\Omega)}\leq  \norm{u^{n}}_{L^2(\Omega)}, \qquad n\geq0,
    \end{equation}
    provided $\tau  \norm{L_h}\leq c_0$, where $c_0$ is a constant.
\end{proposition}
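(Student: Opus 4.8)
The plan is to establish the three-step estimate \eqref{three_step_esti} by summing the energy equality from Lemma \ref{energyen} over three consecutive time steps and showing that the accumulated energy change is nonpositive. Concretely, applying Lemma \ref{energyen} at the indices $n$, $n+1$, and $n+2$ gives
\begin{align*}
    \norm{u^{n+3}}^2_{L^2(\Omega)} - \norm{u^{n}}^2_{L^2(\Omega)} = \mathcal{Q}(u^n) + \mathcal{Q}(u^{n+1}) + \mathcal{Q}(u^{n+2}),
\end{align*}
so it suffices to prove that the right-hand side is $\leq 0$ under the stated time-step restriction $\tau\norm{L_h}\leq c_0$. The idea is that although a single $\mathcal{Q}(u^n)$ need not be sign-definite (the positive numerical-dissipation term $\frac{1}{576}\norm{\mathcal{L}_h^4 u^n}^2_{L^2(\Omega)}$ can spoil negativity at one step, which is precisely the obstruction underlying the Sun–Shu counterexample), the troublesome positive contribution at one step can be absorbed by the strictly negative term $-\frac{1}{72}\norm{\mathcal{L}_h^3 u^n}^2_{L^2(\Omega)}$ appearing at a neighboring step, once we exploit the relation $u^{n+1}=P_4(\mathcal{L}_h)u^n$ to compare the relevant norms across steps.

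First I would expand $\mathcal{Q}(u^n)+\mathcal{Q}(u^{n+1})+\mathcal{Q}(u^{n+2})$ into its constituent pieces and use the evolution $u^{n+j}=P_4(\mathcal{L}_h)^j u^n$ to re-express every quantity in terms of $u^n$ alone. The key mechanism is to control the bad term $\frac{1}{576}\norm{\mathcal{L}_h^4 u^{n+2}}^2_{L^2(\Omega)}$ (and the analogous terms at the other indices): since $\mathcal{L}_h^4 u^{n+2} = \mathcal{L}_h\big(\mathcal{L}_h^3 u^{n+2}\big)$ and $\norm{\mathcal{L}_h}=\tau\norm{L_h}\leq c_0$ is small, one bounds $\frac{1}{576}\norm{\mathcal{L}_h^4 u^{n+2}}^2_{L^2(\Omega)}\leq \frac{c_0^2}{576}\norm{\mathcal{L}_h^3 u^{n+2}}^2_{L^2(\Omega)}$, so for $c_0$ small enough this is dominated by the $-\frac{1}{72}\norm{\mathcal{L}_h^3 u^{n+2}}^2_{L^2(\Omega)}$ term at the same step. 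After this absorption, the net numerical-dissipation contribution is nonpositive, and what remains is a sum of quadratic forms of the type treated in Lemma \ref{negativeQ}.

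The plan then is to package the leftover terms into the form $\mathcal{Q}_1$ of Lemma \ref{negativeQ}: collect the residual $\mathcal{L}_h^3$ terms (with a strictly negative coefficient $\zeta<0$ surviving after the absorption above) together with the bilinear forms $\tau\sum_{i,j}\alpha_{ij}[\mathcal{L}_h^i u^{n+\ell},\mathcal{L}_h^j u^{n+\ell}]$, and invoke semi-negativity $L_h+L_h^T\leq 0$ (equivalently $[v,v]\geq 0$) together with negative definiteness of the relevant principal submatrix of $A$. Here I would verify that the $3\times 3$ leading block $\bar A=(\alpha_{ij})_{i,j=0}^2$ is negative definite, which is exactly the hypothesis Lemma \ref{negativeQ} requires; Lemma \ref{negativeQ} then furnishes a constant $c_0>0$, independent of $\tau$ and $h$, such that each such quadratic form is $\leq 0$ once $\norm{\mathcal{L}_h}\leq c_0$. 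Combining the nonpositive dissipation contribution with the nonpositive quadratic forms yields $\mathcal{Q}(u^n)+\mathcal{Q}(u^{n+1})+\mathcal{Q}(u^{n+2})\leq 0$, which is \eqref{three_step_esti}.

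The main obstacle I anticipate is the bookkeeping in the absorption step: one must cross-reference the positive $\mathcal{L}_h^4$ term at a given step against the negative $\mathcal{L}_h^3$ term and ensure, after re-expressing everything through $P_4(\mathcal{L}_h)$, that the threshold $c_0$ can be chosen uniformly and that no sign is accidentally lost when the $P_4$ factors are expanded. A secondary subtlety is confirming that $\bar A$ is genuinely negative definite (one checks the signs of the leading principal minors of $-\bar A$, i.e. that $-\bar A$ is positive definite), so that Lemma \ref{negativeQ} applies verbatim; the remaining manipulations—Cauchy–Schwarz on the cross terms and the use of $[v,v]\geq 0$—are routine.
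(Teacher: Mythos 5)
Your overall skeleton matches the paper's proof: sum the energy equality of Lemma \ref{energyen} over the indices $n$, $n+1$, $n+2$, use $u^{n+\ell}=P_4(\mathcal{L}_h)^{\ell}u^n$ to rewrite everything in terms of $u^n$, absorb each step's positive term $\frac{1}{576}\norm{\mathcal{L}_h^4\,\cdot\,}^2_{L^2(\Omega)}$ into the negative term $-\frac{1}{72}\norm{\mathcal{L}_h^3\,\cdot\,}^2_{L^2(\Omega)}$ at the same step under $\norm{\mathcal{L}_h}\leq c_0$, and finish with Lemma \ref{negativeQ}. The genuine gap is in your final step. You propose to verify that the leading $3\times 3$ block $(\alpha_{ij})_{i,j=0}^2$ of the \emph{single-step} matrix $A$ from Lemma \ref{energyen} is negative definite, and to conclude that \emph{each} of the three quadratic forms is separately nonpositive. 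That verification fails: with
\begin{equation*}
A_0=(\alpha_{ij})_{i,j=0}^2=-\begin{pmatrix} 1 & 1/2 & 1/6\\ 1/2 & 1/3 & 1/8\\ 1/6 & 1/8 & 1/24 \end{pmatrix},
\end{equation*}
a direct computation gives $\det(-A_0)=-\tfrac{1}{1728}<0$, so $-A_0$ is not positive definite and hence $A_0$ is \emph{not} negative definite. This is not an accident of arithmetic: if each $\mathcal{Q}(u^{n+\ell})$ could be made nonpositive under a time-step restriction alone, the scheme would be strongly stable in a \emph{single} step for every semi-negative operator, contradicting the Sun--Shu counterexample recalled in the paper. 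The indefiniteness of $A_0$ is exactly the obstruction that forces a multi-step argument.

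The paper's proof never treats the steps separately. Rewriting $\mathcal{Q}(u^{n+1})$ and $\mathcal{Q}(u^{n+2})$ through $P_4(\mathcal{L}_h)$ changes their coefficient matrices: the leading $3\times3$ blocks become the matrices $A_1$ and $A_2$ displayed in the paper, and neither of these is negative definite either (for instance $\det(-A_1)=-\tfrac{1}{1728}<0$ as well). Only after \emph{summing} the three quadratic forms does one invoke Lemma \ref{negativeQ}, applied once to
\begin{equation*}
\mathcal{Q}_1(u^n)=-\frac{1}{144}\norm{\mathcal{L}_h^3u^n}^2_{L^2(\Omega)}+\tau\sum_{i,j=0}^{11}\bar{\alpha}_{ij}\,[\mathcal{L}_h^iu^n,\mathcal{L}_h^ju^n],
\qquad
A_0+A_1+A_2=-\begin{pmatrix} 3 & 9/2 & 9/2\\ 9/2 & 9 & 73/8\\ 9/2 & 73/8 & 97/8 \end{pmatrix},
\end{equation*}
and it is this \emph{sum} of blocks that is negative definite (eigenvalues approximately $-21.94$, $-1.64$, $-0.54$). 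The cross-step cancellation is the essential idea of the three-step result, not a bookkeeping convenience, and your plan omits it. A secondary point your absorption step glosses over: the surplus $\zeta<0$ fed into Lemma \ref{negativeQ} must multiply $\norm{\mathcal{L}_h^3u^n}^2_{L^2(\Omega)}$ in the \emph{same} variable $u^n$ as the combined quadratic form; this is why the paper keeps the surplus $-\frac{1}{144}\norm{\mathcal{L}_h^3u^n}^2_{L^2(\Omega)}$ only from the step-$n$ dissipation and bounds the dissipation at steps $n+1$ and $n+2$ simply by zero, rather than retaining surpluses in the variables $u^{n+1}$, $u^{n+2}$ as your outline suggests.
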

\begin{proof}
    From the Lemma \ref{energyen}, energy equality implies
    \begin{equation}\label{h3}
        \norm{u^{n+3}}_{L^2(\Omega)}^2- \norm{u^{n}}_{L^2(\Omega)}^2 = \mathcal{Q}(u^{n+2})+\mathcal{Q}(u^{n+1})+\mathcal{Q}(u^{n}), 
    \end{equation}
    where $\mathcal{Q}(u^{n})$ is defined in Lemma \ref{energyen}.
    We find the estimates for $\mathcal{Q}(u^{n+1})$ and $\mathcal{Q}(u^{n+2})$ in terms of $u^n$ in its quadratic part by the following calculation:
    \begin{align*}
        \mathcal{Q}(u^{n+1}) &= \frac{1}{576}\norm{\mathcal{L}_h^4u^{n+1}}^2_{L^2(\Omega)}-\frac{1}{72}\norm{\mathcal{L}_h^3u^{n+1}}^2_{L^2(\Omega)}+\tau \sum\limits_{i,j=0}^{3}\alpha_{ij}[\mathcal{L}_h^iu^{n+1},\mathcal{L}_h^ju^{n+1}]\\
        &=\frac{1}{576}\norm{\mathcal{L}_h^4u^{n+1}}^2_{L^2(\Omega)}-\frac{1}{72}\norm{\mathcal{L}_h^3u^{n+1}}^2_{L^2(\Omega)}+\tau \sum\limits_{i,j=0}^{3}\alpha_{ij}[\mathcal{L}_h^iP_4(\mathcal{L}_h)u^{n},\mathcal{L}_h^jP_4(\mathcal{L}_h)u^{n}]\\
        &=\frac{1}{576}\norm{\mathcal{L}_h^4u^{n+1}}^2_{L^2(\Omega)}-\frac{1}{72}\norm{\mathcal{L}_h^3u^{n+1}}^2_{L^2(\Omega)}+\tau \sum\limits_{i,j=0}^{7}\Tilde\alpha_{ij}[\mathcal{L}_h^iu^{n},\mathcal{L}_h^ju^{n}]
    \end{align*}
We define the matrices $A_0$ and $A_1$ as
   \begin{equation*}
       A_0 = (\alpha_{ij})_{i,j=0}^2 = -\begin{pmatrix}
1 & 1/2 & 1/6 \\
1/2 & 1/3 & 1/8 \\
1/6 & 1/8 & 1/24
\end{pmatrix}, \qquad  A_{1} = (\Tilde\alpha_{ij})_{i,j=0}^2  = -\begin{pmatrix}
1 & 3/2 & 7/6 \\
3/2 & 7/3 & 15/8 \\
7/6 & 15/8 & 37/24 
\end{pmatrix}.
   \end{equation*}
Our interest in the first $3\times 3$ coefficient matrix as it is important to note that this is sufficient to apply the result in Lemma \ref{negativeQ}. While the complete matrix can be obtained and is not difficult, it involves a lengthy derivation. For brevity, we choose not to provide the complete matrix here, and we refer to \cite{sun2017stability} for detailed information.

In a similar way, we obtain
   \begin{align*}
        \mathcal{Q}(u^{n+2}) &= \frac{1}{576}\norm{\mathcal{L}_h^4u^{n+2}}^2_{L^2(\Omega)}-\frac{1}{72}\norm{\mathcal{L}_h^3u^{n+2}}^2_{L^2(\Omega)}+\tau \sum\limits_{i,j=0}^{3}\alpha_{ij}[\mathcal{L}_h^iu^{n+2},\mathcal{L}_h^ju^{n+2}]\\
        &=\frac{1}{576}\norm{\mathcal{L}_h^4u^{n+2}}^2_{L^2(\Omega)}-\frac{1}{72}\norm{\mathcal{L}_h^3u^{n+2}}^2_{L^2(\Omega)}+\tau \sum\limits_{i,j=0}^{7}\Tilde\alpha_{ij}[\mathcal{L}_h^iu^{n+1},\mathcal{L}_h^ju^{n+1}]\\
        &=\frac{1}{576}\norm{\mathcal{L}_h^4u^{n+2}}^2_{L^2(\Omega)}-\frac{1}{72}\norm{\mathcal{L}_h^3u^{n+2}}^2_{L^2(\Omega)}+\tau \sum\limits_{i,j=0}^{7}\Tilde\alpha_{ij}[\mathcal{L}_h^iP_4(\mathcal{L}_h)u^{n},\mathcal{L}_h^jP_4(\mathcal{L}_h)u^{n}]\\
        &=\frac{1}{576}\norm{\mathcal{L}_h^4u^{n+2}}^2_{L^2(\Omega)}-\frac{1}{72}\norm{\mathcal{L}_h^3u^{n+2}}^2_{L^2(\Omega)}+\tau \sum\limits_{i,j=0}^{11}\Hat\alpha_{ij}[\mathcal{L}_h^iu^{n},\mathcal{L}_h^ju^{n}],
    \end{align*}
    where 
    \begin{equation*}
        A_{2} = (\Hat\alpha_{ij})_{i,j=0}^2  = -\begin{pmatrix}
1 & 5/2 & 19/6 \\
5/2 & 19/3 & 57/8 \\
19/6 & 57/8 & 253/24 
\end{pmatrix}.
    \end{equation*}
Assuming $\norm{\mathcal{L}_h}\leq 2$, we have the following estimates
    \begin{align*}
        \frac{1}{576}\norm{\mathcal{L}_h^4u^{n+2}}^2_{L^2(\Omega)}-\frac{1}{72}\norm{\mathcal{L}_h^3u^{n+2}}^2_{L^2(\Omega)}&\leq 0,\\
        \frac{1}{576}\norm{\mathcal{L}_h^4u^{n+1}}^2_{L^2(\Omega)}-\frac{1}{72}\norm{\mathcal{L}_h^3u^{n+1}}^2_{L^2(\Omega)}&\leq 0,\\
        \frac{1}{576}\norm{\mathcal{L}_h^4u^{n}}^2_{L^2(\Omega)}-\frac{1}{72}\norm{\mathcal{L}_h^3u^{n}}^2_{L^2(\Omega)}&\leq -\frac{1}{144}\norm{\mathcal{L}_h^3u^n}^2_{L^2(\Omega)}.
    \end{align*}
Hence \eqref{h3} becomes
    \begin{align*}
        \norm{u^{n+3}}_{L^2(\Omega)}^2- \norm{u^{n}}_{L^2(\Omega)}^2 &\leq -\frac{1}{144}\norm{\mathcal{L}_h^3u^n}^2_{L^2(\Omega)} + \tau \sum\limits_{i,j=0}^{11}\Bar\alpha_{ij}[\mathcal{L}_h^iu^{n},\mathcal{L}_h^ju^{n}]=:\mathcal{Q}_1(u^n).
    \end{align*}
Afterwards, we define the matrix $A$ as
    \begin{equation*}
        A := (\Bar\alpha_{ij})_{i,j=0}^2 = (\alpha_{ij})_{i,j=0}^2+(\Tilde\alpha_{ij})_{i,j=0}^2+(\Hat\alpha_{ij})_{i,j=0}^2  = -\begin{pmatrix}
3 & 9/2 & 9/2 \\
9/2 & 9 & 73/8 \\
9/2 & 73/8 & 97/8 
\end{pmatrix}.
\end{equation*}
Since the eigenvalues of $A$ are $-21.9444, -1.64399$ and $-0.536623$, it follows that $A$ is negative definite. Applying the Lemma \ref{negativeQ}, we obtain $\mathcal{Q}_1(u^n)\leq 0$ As a consequence, we have
 \begin{equation*}
        \norm{u^{n+3}}_{L^2(\Omega)}\leq  \norm{u^{n}}_{L^2(\Omega)}.
    \end{equation*}
Hence the result follows.
\end{proof}
\begin{proof}[Proof of Theorem \ref{thm:RK4_stab_result}]
 Since the LDG operator is semi-negative, applying the two-step stability result from 
\cite[Theorem 2.1]{sun2017stability}, we have
 \begin{equation}\label{two_step_esti}
        \norm{u^{n+2}}_{L^2(\Omega)}\leq  \norm{u^{n}}_{L^2(\Omega)},
    \end{equation}
    provided $\tau  \norm{L_h}\leq c_0$, where $c_0$ is a constant.
The estimate \eqref{two_step_esti} along with the estimate \eqref{three_step_esti} combined together provide the desired stability estimate of the fully discrete LDG scheme \eqref{RK4scheme}.
\end{proof}

\section{Numerical Experiments}\label{sec5}

In this section, our goal is to validate the proposed LDG scheme \eqref{LDGschemeBO} for the Benjamin-Ono equation \eqref{BOeqn}. We begin our numerical validation using the Crank-Nicolson (CN) LDG scheme \eqref{fdLDGschemeBO1} with \(\tau  = 0.5h\) and \(k=1\). 
We employ the periodic one soliton and two soliton solutions of the Benjamin-Ono equation. It is important to note that since the implicit term appears in the nonlinear part of the scheme \eqref{fdLDGschemeBO1}, we use the Newton iteration to solve it at each time step.

Rate of convergence $R_E$ for errors is defined for each intermediate step between element numbers $N_1$ and $N_2$ as
\begin{equation*}
    R_E = \frac{\ln(E(N_1))-\ln(E(N_2))}{\ln(N_2)-\ln(N_1)},
\end{equation*}
where $E$ is a function of number of elements $N$ and represents the $L^2$-error. The Benjamin-Ono equation \eqref{BOeqn} possesses an infinite number of conserved quantities \cite{galtung2018convergent}. Hereby we consider the first two specific quantities known as $mass$ and $momentum$. With normalization, these quantities in the discrete set up can be expressed as follows:
\begin{align*}
     \mathbf C_1^h &:= \frac{\int_{\Omega} u\,dx}{\int_{\Omega} U_0\,dx},\qquad
     \mathbf C_2^h := \frac{\|u\|_{L^2(\Omega)}}{\norm{U_0}_{L^2(\Omega)}}.
\end{align*}
Our aim is to preserve these quantities in the discrete setup.

\subsection*{Example 1} 
We compare the approximate solution $u$ obtained by \eqref{fdLDGschemeBO1} with the exact periodic solution of the Benjamin-Ono equation \eqref{BOeqn} with $f(U) = \frac{1}{2}U^2$. For instance, we consider the solution of \eqref{BOeqn} from Thom{\'e}e et al.  \cite{thomee1998numerical} (also see \cite{dwivedi2024stability,dwivedi2024fully})
\begin{equation}\label{BOsolution}
U(x,t) = \frac{2c\delta}{1-\sqrt{1-\delta^2}\cos(c\delta(x-ct))}, \qquad \delta = \frac{\pi}{cL},
\end{equation}
where we choose  $L=15$, $c=0.25$. We compare the above exact solution at the final time $T=20$. This solution characterizes periodic solitary waves, exhibiting periodicity and amplitude determined by the parameters $L$ and $c$ respectively.
Table \ref{tab:error_tableBO_t1} presents the \(L^2\)-errors of the proposed fully discrete CN-LDG scheme \eqref{fdLDGschemeBO1} using a polynomial of degree one. The results confirm that the numerically obtained rates are optimal and that the approximate solution \(u\) converges to the exact solution \(U\).

 \begin{table}[htbp]
        \centering
        \begin{tabular}{||c|c|c|c|c||}
            \hline
           $N$ & $E$ & $R_E$ & $\mathbf C_1^h$ & $\mathbf C_2^h$ \\
            \hline
            \hline
              160  & 1.65e-02&&  1.04 &0.97 \\  
            & & 2.14 &  &\\  
             320 & 3.77e-03&  &1.05 &0.97\\
            & &2.07& & \\
             640 &8.98e-04&   &1.02 &0.98\\  
            &  &2.04&  & \\  
            1280 &2.18e-04& &1.00 &1.00\\
            \hline
        \end{tabular}
        \caption{$L^2$-error and rate of convergence $R_E$ for the CN-LDG scheme at time $T=20$ taking $N$ elements and polynomial degree $k=1$.}
        \label{tab:error_tableBO_t2CN}
\end{table}

To achieve the higher order accuracy of the proposed LDG scheme, we perform the numerical results using the four-stage fourth-order explicit Runge-Kutta LDG scheme \eqref{RK4scheme} of 
\begin{align}\label{num:temp_1}
U_t = -\frac{1}{2}\left( U^2\right)_x + \mathcal{H}U_{xx}.
\end{align}
Our focus lies on verifying the performance of the scheme using a low storage explicit Runge-Kutta (LSERK4) of fourth-order time discretization \cite{carpenter1994fourth,dwivedi2024local,hesthaven2007nodal} of the form
\begin{align*}
    &y^{(0)} = u_h^n,\\
    &k^0 =0,\\
    &\text{for } j =1:5\\
    & \qquad \begin{cases}
        k^j = a_j k^{j-1} +\tau L_hy^{(j-1)},\\
        y^{(j)} = y^{(j-1)}+b_j k^{j},
    \end{cases}\\
    &u_h^{n+1} = y^{(5)},
\end{align*}
where the weighted coefficients $a_j$, $b_j$ and $c_j$ of the LSERK4 method are given in \cite{hesthaven2007nodal}.  The above defined iteration is equivalent to the classical fourth-order method \eqref{RK4scheme} \cite{hesthaven2007nodal}. LSERK is considerably more efficient and accurate than \eqref{RK4scheme} since it has the disadvantage that it requires four extra storage arrays.  To compute the approximate solution using the LDG scheme \eqref{LDGschemeBO} with LSERK time discretization,  one may refer to \cite{hesthaven2007nodal}. 

In numerical implementation with LSERK time discretization, we initially compare the approximate solution obtained from the LDG scheme \eqref{LDGschemeBO} with the exact solution \eqref{BOsolution} at time $t=10$, and further at the final time $t=20$. Figure \ref{fig:BOLDG} depicts the comparison of the solution at various times.
This validation confirms that the numerical scheme converges to the exact solution. The Table \ref{tab:error_tableBO_t1} presents the $L^2$-errors of the proposed scheme under polynomial degree up to three. It is observed that the errors are converging to zero with relatively coarser grids and optimal orders of convergence are obtained at time $T=10$ for each polynomial degree up to three and the discrete conserved quantities $\mathbf C_1^h$ and $\mathbf C_2^h$ are also preserved. Similar analysis has been carried out at time $T=20$ and details are presented in Table \ref{tab:error_tableBO_t2} and optimal rates are obtained there as well.
\begin{figure}
    \centering
    \includegraphics[width=0.8 \linewidth, height=8cm]{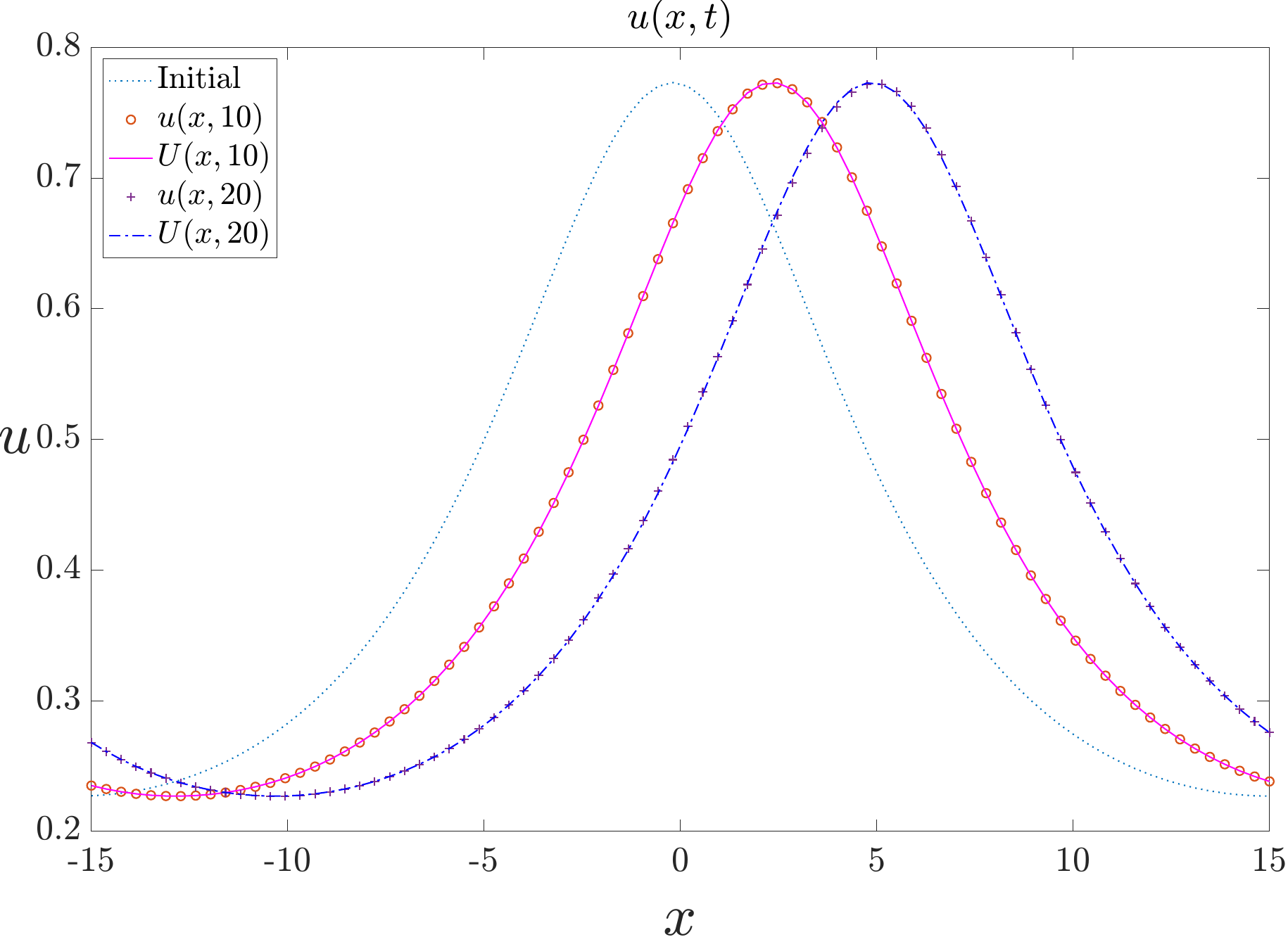}
    \caption{Exact $U(x,t)$  and approximate solution $u(x,t)$ computed by the RK-LDG scheme at $T=10$ and $T=20$ with $N=160$, $k=3$ and initial condition $U(x,0)$ of \eqref{BOeqn}.}
    \label{fig:BOLDG}
\end{figure}
\begin{table}[htbp]
        \centering
        \begin{tabular}{||c|c|c|c|c|c|c|c|c||}
            \hline
            $k$ & \multicolumn{2}{c|}{$k=1$}& \multicolumn{2}{c|}{$k=2$} & \multicolumn{4}{c||}{$k=3$} \\
            \hline
            \hline
          $N$  & $E$ & $R_E$  & $E$ & $R_E$ & $E$ & $R_E$ & $\mathbf C_1^h$ & $\mathbf C_2^h$ \\
            \hline
              40 & 2.91e-01 &  & 3.65e-02 &  & 8.69e-03& & 1.01 & 1.01 \\  
            &&  2.01&   &  2.76& & 4.37 &&\\  
             80 & 7.15e-02&& 5.37e-03 &  & 4.20e-04 & & 1.00 &1.00\\
              &&2.02&  & 2.93 & & 4.07 && \\
             160 & 1.77e-02&  & 7.029e-04 &  & 2.49e-05& & 1.00 & 1.00\\  
            &   & 2.00& & 2.98 & & 4.00&&\\  
             320 &4.40e-03& & 8.88e-05 &  & 1.56e-06 & & 1.00 & 1.00\\
            \hline
        \end{tabular}
        \caption{$L^2$-error and rate of convergence $R_E$ for the RK-LDG scheme at time $T=10$ taking $N$ elements and polynomial degree $k$ with normalized conserved quantities $\mathbf{C_1^h}$ and $\mathbf{C_2^h}$.}
        \label{tab:error_tableBO_t1}
 \end{table}
   \begin{table}[htbp]
        \centering
        \begin{tabular}{||c|c|c|c|c|c|c|c|c||}
            \hline
             $k$ & \multicolumn{2}{c|}{$k=1$}& \multicolumn{2}{c|}{$k=2$} & \multicolumn{4}{c||}{$k=3$} \\
            \hline
            \hline
          $N$  & $E$ & $R_E$  & $E$ & $R_E$ & $E$ & $R_E$ & $\mathbf C_1^h$ & $\mathbf C_2^h$\\
            \hline
              40  & 6.02e-01&& 6.015e-01 &  & 5.98e-02& & 0.99 &0.98 \\  
            & & 2.05&  & 2.65 & & 4.46 &&\\  
           80 & 1.45e-01&& 9.54e-02 &  &2.718e-03 & &1.00 &1.00\\
            & &2.02& & 2.90 & & 4.09 && \\
             160 &3.56e-02&  & 1.27e-02 &  & 1.59e-04& &1.00 &1.00\\  
            &  &2.01&  & 2.97 & & 4.02&&\\  
            320 &8.83e-03& & 1.62e-03 &  & 9.80e-06 & &1.00 &1.00\\
            \hline
        \end{tabular}
        \caption{$L^2$-error and rate of convergence $R_E$ for the RK-LDG scheme at time $T=20$ taking $N$ elements and polynomial degree $k$ with normalized conserved quantities $\mathbf{C_1^h}$ and $\mathbf{C_2^h}$.}
        \label{tab:error_tableBO_t2}
\end{table}

\subsection*{Example 2} We consider a two-soliton solution $U_{2}(x,t)$ to the Benjamin-Ono equation \eqref{BOeqn}, derived using the inverse scattering transform method. The expression is given by \cite{thomee1998numerical,galtung2018convergent}:
\begin{equation}
U_{2}(x,t) = \frac{4 c_1 c_2 \left( c_1 \lambda_1^2 + c_2 \lambda_2^2 + \frac{(c_1 + c_2)^3}{c_1 c_2 (c_1 - c_2)^2} \right)}{\left( c_1 c_2 \lambda_1 \lambda_2 - \frac{(c_1 + c_2)^2}{(c_1 - c_2)^2} \right)^2 + (c_1 \lambda_1 + c_2 \lambda_2)^2},
\end{equation}
where $\lambda_1$ and $\lambda_2$ are given by
\[
    \lambda_1 = x - c_1 t - d_1, \quad \lambda_2 = x - c_2 t - d_2,
\]
and we choose the parameters $ c_1 = 0.3, \, c_2 = 0.6, \, d_1 = -30, \, d_2 = -55.$ The LDG formulation involves evaluation of terms of the form $(\mathcal{H} \phi_i, \psi_i)$ for basis functions $\phi_i, \psi_i$ in each element $I_i$. These integrals are computed using the Gaussian quadrature. For the first Hilbert integral, we use a 7-point Gaussian quadrature in each element $I_i$. For the second Hilbert integral involving interaction terms, we use an 8-point Gaussian quadrature. This ensures high accuracy in evaluating the nonlocal dispersive contribution while retaining efficiency.

We set the initial data as $u_0(x) = U_{2}(x,0)$ and run the simulation until final time $T = 180$. We compare the numerical solution $u(x, T)$ with the exact two-soliton profile $U_{2}(x, T)$ using $L^2$-norm error metrics. The mesh consists of $N$ uniform elements and we test polynomial degrees $k = 1, 2, 3$. We employ the LSERK4 scheme to achieve high-order accuracy in time while maintaining computational efficiency.

Table \ref{tab:twosoliton_error_table} confirms that the proposed LDG scheme achieves the optimal convergence rates of order $k+1$ in the $L^2$-norm for polynomial degree $k = 1, 2, 3$, verifying the accuracy of the spatial discretization.
Figure \ref{fig:BOLDG2} shows the comparison of the exact and numerical two-soliton convergence at $T = 180$ for $k=3$ with $N=2560$. The LDG scheme successfully captures the nonlinear interaction and the propagation of both solitons with high accuracy. 

The numerical experiment confirms the accuracy and efficiency of the LDG scheme for solving the generalized Benjamin-Ono equation. By carefully selecting two-soliton parameters, we validated that the scheme accurately captures nonlinear wave interactions. Both Crank–Nicolson and LSERK4 time discretizations yield optimal rates of convergence. The high-order Gaussian quadrature used for evaluating the Hilbert transform further ensures the fidelity of the method.

\begin{table}[htbp]
    \centering
    \begin{tabular}{||c|c|c|c|c|c|c|c|c||}
        \hline
        $k$ & \multicolumn{2}{c|}{$k=1$} & \multicolumn{2}{c|}{$k=2$} & \multicolumn{4}{c||}{$k=3$} \\
        \hline
        \hline
        $N$ & $E$ & $R_E$ & $E$ & $R_E$ & $E$ & $R_E$ & $\mathbf{C_1^h}$ & $\mathbf{C_2^h}$ \\
        \hline
        640  & 5.82e-01 &      & 4.92e-02 &      & 3.86e-03  &      & 0.997 & 0.984 \\
            &          & 2.08 &           & 2.77 &           & 3.93 &       &       \\
         1280 & 1.36e-01 &      & 7.36e-03  &      & 2.47e-04  &      & 0.999 & 0.998 \\
            &          & 2.01 &           & 2.91 &           & 3.97 &       &       \\
        2560 & 3.38e-02 &      & 9.71e-04  &      & 1.57e-05  &      & 1.000 & 1.000 \\
            &          & 2.00 &           & 2.95 &           & 3.92 &       &       \\
        5120 & 8.44e-03 &      & 1.23e-04  &      & 1.03e-06  &      & 1.000 & 1.000 \\
        \hline
    \end{tabular}
    \caption{$L^2$-error and convergence rate $R_E$ for the LDG scheme with LSERK4 time integration at $T=180$ for the two-soliton solution, using $N$ elements and polynomial degree $k$. Normalized conserved quantities $\mathbf{C_1^h}$ and $\mathbf{C_2^h}$ confirm numerical mass and energy conservation.}
    \label{tab:twosoliton_error_table}
\end{table}
\begin{figure}
    \centering
    \includegraphics[width=0.8 \linewidth, height=8cm]{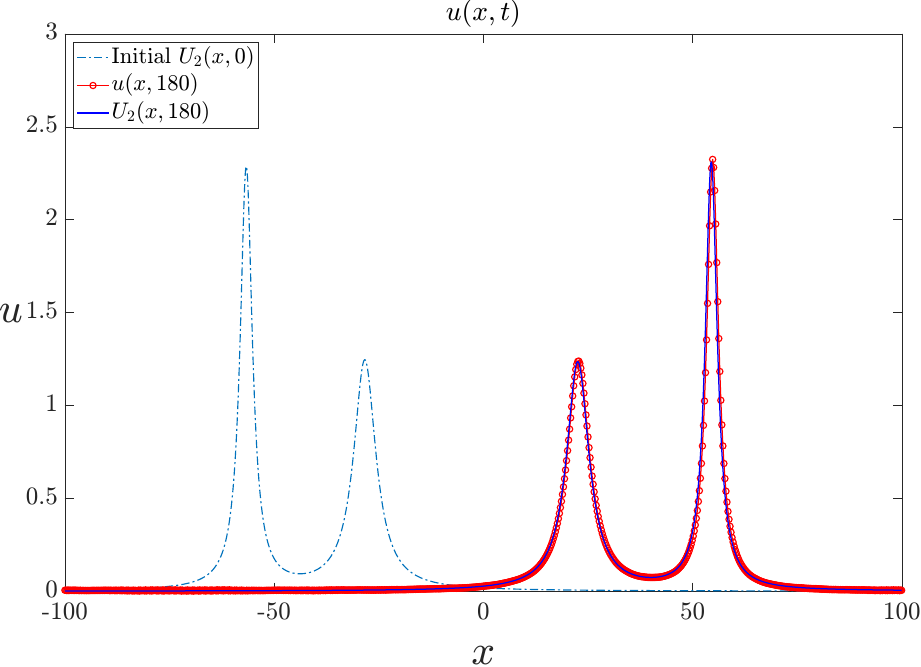}
    \caption{Exact $U_2(x,t)$  and approximate solution $u(x,t)$ computed by the RK-LDG scheme at $T=180$ with $N=2560$, $k=3$ and initial condition $U_2(x,0)$ of \eqref{BOeqn}.}
    \label{fig:BOLDG2}
\end{figure}


\section{Concluding remarks}\label{sec6}
We have designed a local discontinuous Galerkin method for the Benjamin-Ono equation with general nonlinear flux. We have shown that the semi-discrete scheme is stable and obtained a suboptimal order of convergence. The stability analysis is also carried out for the fully discrete Crank-Nicolson LDG scheme considering any general nonlinear flux function and strong stability of the fourth-order Runge-Kutta LDG scheme under certain assumptions on the flux function. The theoretical convergence analysis for the fully discrete scheme incorporating the fourth-order Runge-Kutta time marching scheme involving general nonlinear flux will be addressed in future work. In the numerical experiments, it is observed that the $L^2$-error is quite small even for the coarser grids and the optimal rates are obtained for various degrees of polynomials which demonstrate the efficiency and accuracy of the proposed scheme. In addition, the proposed fully discrete schemes preserve the conservative quantities such as mass and momentum.




\section*{Conflicts of interest} 
The authors declare that they have no known competing financial interests that could have appeared to influence the work reported in this paper. Furthermore, no data were used for the research described in the article.



\begin{thebibliography}{99}


\bibitem{fokas1983inverse}
M. J. Ablowitz and A. S. Fokas.
\newblock{The inverse scattering transform for the Benjamin-Ono equation-a pivot to multidimensional problems.}
\newblock{\em Studies in Applied Mathematics},  68 (1983), no. 1, 1--10.

\bibitem{aboelenen2018high}
T. Aboelenen.
\newblock{A high-order nodal discontinuous Galerkin method for nonlinear fractional Schr{\"o}dinger type equations.}
\newblock{\em Communications in Nonlinear Science and Numerical Simulation},  54 (2018), 428--452.




\bibitem{bassi1997high}
	F. Bassi and S. Rebay.
	\newblock{A high-order accurate discontinuous finite element method for the numerical solution of the compressible Navier--Stokes equations.}
	\newblock{\em Journal of Computational Physics}, 131 (1997), no. 2, 267--279.

 







\bibitem{carpenter1994fourth}
H. M. Carpenter and A. C. Kennedy.
\newblock{Fourth-order 2N-storage Runge-Kutta schemes.}
\newblock{\em  NASA TM 109112, NASA Langley Research Center},  1994.

\bibitem{case1979benjamin}
K. M. Case.
\newblock{Benjamin-Ono related equations and their solutions.}
\newblock{\em  Proceedings of the National Academy of Sciences},  76 (1979), no. 1, 1--3.

 \bibitem{Ciarlet}
P. G. Ciarlet.
\newblock{The finite element method for elliptic problems.}
\newblock{\em Society for Industrial and Applied Mathematics},  2002.

 \bibitem{cockburn2012discontinuous}
B. Cockburn, G. E. Karniadakis  and  C.-W. Shu.
\newblock{Discontinuous Galerkin methods: theory, computation and applications.}
\newblock{\em Springer Science \& Business Media}, 2012.


 \bibitem{cockburn1998local}
B. Cockburn and  C.-W. Shu.
\newblock{The local discontinuous Galerkin method for time-dependent convection-diffusion systems.}
\newblock{\em SIAM Journal on Numerical Analysis}, 35 (1998), no. 6, 2440--2463.






	

 
 \bibitem{dutta2016convergence}
	R. Dutta, H. Holden, U. Koley and N. H. Risebro.
	\newblock{Convergence of finite difference schemes for the Benjamin-Ono equation.}
	\newblock{\em Numerische Mathematik}, 134 (2016), no. 2, 249--274.
	
	
	

 
 \bibitem{dwivedi2024stability}
	M. Dwivedi and T. Sarkar.
	\newblock{Stability and convergence analysis of a Crank-Nicolson Galerkin scheme for the fractional Korteweg-de Vries equation}.
	\newblock{\em SMAI Journal of Computational Mathematics}, 10 (2024), 107--139.





\bibitem{dwivedi2024fully}
	M. Dwivedi and T. Sarkar.
	\newblock{Fully discrete finite difference schemes for the fractional Korteweg-de Vries equation}.
	\newblock{\em Journal of Scientific Computing},  101 (2024), no. 30. 
 
\bibitem{dwivedi2024local}
	M. Dwivedi and T. Sarkar.
	\newblock{Local discontinuous Galerkin method for fractional Korteweg-de Vries equation}.
	\newblock{\em arXiv preprint arXiv:2404.18069}, (2024).




 


	
	
\bibitem{galtung2018convergent}
	S. T. Galtung.
	\newblock{Convergent Crank–Nicolson Galerkin scheme for the Benjamin-Ono equation.}
	\newblock{\em Discrete and Continuous Dynamical Systems}, 38 (2018), no. 3, 1243--1268.
	




\bibitem{hesthaven2007nodal}
	J. S. Hesthaven and T. Warburton.
	\newblock{Nodal discontinuous Galerkin methods: algorithms, analysis, and applications}.
	\newblock{\em Springer Science and Business Media}, 2007.


 
	


\bibitem{hunter2023stability}
	J. Hunter, Z. Sun and Y. Xing.
	\newblock{Stability and time-Step constraints of implicit-explicit Runge-Kutta methods for the linearized Korteweg-de Vries equation.}
	\newblock{\em Communications on Applied Mathematics and Computation}, 6 (2024), no. 1, 658--687.


\bibitem{ionescu2007global}
	A. Ionescu and C. E. Kenig.
	\newblock{Global well-posedness of the Benjamin-Ono equation in low-regularity spaces.}
	\newblock{\em Journal of the American Mathematical Society}, 20 (2007), no. 3, 753--798.

\bibitem{jose1986cauchy}
	R. J. I{\'o}rio.
	\newblock{On the Cauchy problem for the Benjamin-Ono equation.}
	\newblock{\em Communications in Partial Differential Equations}, 11 (1986), no. 10,  1031--1081.


\bibitem{ishimori1982solitons}
	Y. Ishimori.
	\newblock{Solitons in a one-dimensional Lennard-Jones lattice.}
	\newblock{\em Progress of Theoretical Physics}, 68 (1982), no. 2,  402--410.



 





 



 



 



\bibitem{king2009hilbert}
	F. W. King.
	\newblock{Hilbert transforms: Volume 2}.
	\newblock{\em Cambridge University Press}, 2009.





 





 


\bibitem{levy2004local}
	 D. Levy, C.-W. Shu and J. Yan.
	\newblock{Local discontinuous Galerkin methods for nonlinear dispersive equations.}
	\newblock{\em Journal of Computational Physics}, 196 (2004), no. 2, 751--772.
 





 



\bibitem{molinet2008global}
	 L. Molinet.
	\newblock{Global well-posedness in $L^2$ for the periodic Benjamin-Ono equation.}
	\newblock{\em American Journal of Mathematics}, 130
(2008), no. 3,  635--683.






\bibitem{reed1973triangular}
	W. H. Reed and T. R. Hill.
	\newblock{Triangular mesh methods for the neutron transport equation.}
	\newblock{\em Technical Report LA-UR-73-479, Los Alamos Scientific Lab}, 1973.
 





 \bibitem{sun2017stability}
	Z. Sun and C.-W. Shu.
	\newblock{Stability of the fourth-order Runge-Kutta method for time-dependent partial differential equations.}
	\newblock{\em Annals of Mathematical Sciences and Applications}, 2 (2017), no. 2, 255--284.

 
 
\bibitem{tao2004global}
	T. Tao.
	\newblock{Global well-posedness of the Benjamin-Ono equation in $H^1(\mathbb{R})$.}
	\newblock{\em Journal of Hyperbolic Differential Equations}, 1 (2004), no. 1, 27--49.
	
\bibitem{thomee1998numerical}
	V. Thom{\'e}e and A. S. V. Murthy.
	\newblock{A numerical method for the Benjamin–Ono equation.}
	\newblock{\em BIT Numerical Mathematics}, 38 (1998), 597--611.










 


 \bibitem{wang2024analysis}
	H. Wang, Q. Tao, C.-W. Shu, and Q. Zhang.
	\newblock{Analysis of Local Discontinuous Galerkin Methods with Implicit-Explicit Time Marching for Linearized KdV Equations}.
	\newblock{\em SIAM Journal on Numerical Analysis}, 62 (2024), no. 5, 2222--2248.

 \bibitem{xu2014discontinuous}
	Q. Xu and J. S. Hesthaven.
	\newblock{Discontinuous Galerkin method for fractional convection-diffusion equations}.
	\newblock{\em SIAM Journal on Numerical Analysis}, 52 (2014), no. 1, 405--423.

\bibitem{xu20192}
Y. Xu, Q. Zhang, C.-W. Shu and H. wang.
\newblock{The $L^{2}$-norm stability analysis of Runge-Kutta discontinuous Galerkin methods for linear hyperbolic equations.}
\newblock{\em SIAM Journal on Numerical Analysis}, 57 (2019), no. 4, 1574--1601.

\bibitem{xu2005local}
Y. Xu and C.-W. Shu.
\newblock{Local discontinuous Galerkin methods for nonlinear Schr{\"o}dinger equations.}
\newblock{\em Journal of Computational Physics}, 205 (2005), no. 1, 72--97.

\bibitem{xu2007error}
Y. Xu and C.-W. Shu.
\newblock{Error estimates of the semi-discrete local discontinuous Galerkin method for nonlinear convection-diffusion and KdV equations.}
\newblock{\em Computer Methods in Applied Mechanics and Engineering}, 196 (2007), no. 37-40, 3805--3822.

\bibitem{xu2010local}
Y. Xu and C.-W. Shu.
\newblock{Local discontinuous Galerkin methods for high-order time-dependent partial differential equations.}
\newblock{\em Communications in Computational Physics}, 7 (2010), no. 1, 1--46.

\bibitem{yan2002local}
J. Yan and C.-W. Shu.
\newblock{A local discontinuous Galerkin method for KdV type equations.}
\newblock{\em SIAM Journal on Numerical Analysis}, 40 (2002), no. 2, 769--791.





\bibitem{zhang2004error}
Q. Zhang and C.-W. Shu.
\newblock{Error estimates to smooth solutions of Runge-Kutta discontinuous Galerkin methods for scalar conservation laws.}
\newblock{\em SIAM Journal on Numerical Analysis}, 42 (2004), no. 2, 641--666.

\end{thebibliography}
\end{document}